\numberwithin{equation}{section}
\newtheorem{theorem}{Theorem}[section]
\newtheorem{proposition}[theorem]{Proposition}
\newtheorem{lemma}[theorem]{Lemma}
\newtheorem{corollary}[theorem]{Corollary}
\theoremstyle{definition}
\newtheorem{definition}[theorem]{Definition}
\newtheorem{remark}[theorem]{Remark}
\newtheorem{example}[theorem]{Example}
\begin{document}

\baselineskip=15pt

\title[Lie algebroid connections on holomorphic principal bundles]{Holomorphic Lie algebroid
connections on holomorphic principal bundles on compact Riemann surfaces}

\author[I. Biswas]{Indranil Biswas}

\address{Department of Mathematics, Shiv Nadar University, NH91, Tehsil Dadri,
Greater Noida, Uttar Pradesh 201314, India}

\email{indranil.biswas@snu.edu.in, indranil29@gmail.com}

\subjclass[2010]{14H60, 53D17, 53B15, 32C38}

\keywords{Lie algebroid, holomorphic connection, principal bundle, Atiyah bundle, reductive group}

\date{}

\begin{abstract}
For a $\Gamma$--equivariant holomorphic Lie algebroid $(V,\, \phi)$, on a compact Riemann surface $X$ equipped
with an action of a finite group $\Gamma$, we investigate the equivariant holomorphic Lie algebroid
connections on holomorphic principal $G$--bundles over $X$, where $G$ is a connected affine complex reductive
group. If $(V,\,\phi)$ is nonsplit, then it is proved that every holomorphic principal $G$--bundle admits
an equivariant holomorphic Lie algebroid connection. If $(V,\,\phi)$ is split, then it is proved that the following
four statements are equivalent:
\begin{enumerate}
\item An equivariant principal $G$--bundle $E_G$ admits an equivariant holomorphic Lie algebroid connection.

\item The equivariant principal $G$--bundle $E_G$ admits an equivariant holomorphic connection.

\item The principal $G$--bundle $E_G$ admits a holomorphic connection.

\item For every triple $(P,\, L(P),\, \chi)$, where $L(P)$ is a Levi subgroup of a parabolic subgroup
$P\, \subset\, G$ and $\chi$ is a holomorphic character of $L(P)$,
and every $\Gamma$--equivariant holomorphic reduction of structure group $E_{L(P)}$ of $E_G$ to $L(P)$,
the degree of the line bundle over $X$ associated to $E_{L(P)}$ for $\chi$ is zero.
\end{enumerate}

The correspondence between $\Gamma$--equivariant principal $G$--bundles over $X$ and parabolic
$G$--bundles on $X/\Gamma$ translates the above result to the context of parabolic $G$--bundles.
\end{abstract} 

\maketitle

\tableofcontents

\section{Introduction}

A well-known theorem of Atiyah and Weil says the following: A holomorphic vector bundle $E$ on a compact
connected Riemann surface $X$ admits a holomorphic connection if and only if the degree of each indecomposable
component of $E$ is zero \cite{At}, \cite{We}. This criterion for the existence of holomorphic connections
extends to holomorphic principal $G$--bundles over $X$, where $G$ is a reductive affine algebraic group
defined over $\mathbb C$ \cite{AB}.

The notion of a holomorphic connection on a holomorphic vector bundle $E$ extends to the notion of holomorphic
Lie algebroid connections on $E$, which we briefly recall.

A holomorphic Lie algebroid over $X$ is a pair $(V,\,\phi)$, where $V$ is a holomorphic vector bundle over $X$
equipped with the structure of a $\mathbb{C}$--bilinear Lie algebra on its sheaf of holomorphic sections, and
$\phi\,:\, V\, \longrightarrow\, TX$ is an ${\mathcal O}_X$--linear homomorphism satisfying the Leibniz rule
$$
[s,\, f t] \ =\ f\,[s,\, t]\;+\;\phi(s)(f)\,t
$$
for all locally defined holomorphic sections $s,\,t$ of $V$ and all locally defined holomorphic
functions $f$.

We work with an equivariant set-up, meaning a finite subgroup $\Gamma\, \subset\, \text{Aut}(X)$
is fixed, and all objects and structures on $X$ are taken to be $\Gamma$--equivariant.

A Lie algebroid $(V,\,\phi)$ is called split if there is a holomorphic $\Gamma$–equivariant homomorphism
$\eta\, :\, TX\,\longrightarrow\, V$ such that $\phi\circ\eta\,=\, {\rm Id}_{TX}$. If $(V,\, \phi)$ is
not split, then it is called nonsplit. See Example \ref{ex1} for nonsplit and split Lie algebroids.

Let $E_G$ be a $\Gamma$--equivariant holomorphic principal $G$--bundle over $X$, where $G$, as before,
is a reductive affine algebraic group defined over $\mathbb C$. Using the Atiyah bundle for $E_G$ and
the pair $(V,\, \phi)$, a $\mathbb C$--Lie algebra bundle ${\mathcal A}(E_G)$ is constructed which fits
in the following short exact sequence of $\Gamma$--equivariant vector bundles over $X$:
$$
0\ \longrightarrow\ \mathrm{ad}(E_G)\ \longrightarrow\ \mathcal{A}(E_G)\ \xrightarrow{\,\,\,\rho
\,\,\,}\ V\ \longrightarrow\ 0,
$$
where $\mathrm{ad}(E_G)$ is the adjoint bundle for $E_G$. An equivariant holomorphic Lie algebroid connection
on $E_G$ is a $\Gamma$--equivariant holomorphic homomorphism $\delta
\, :\, V\, \longrightarrow\, \mathcal{A}(E_G)$ such that
$\rho\circ\delta\,=\,{\rm Id}_V$. In the special case where $(V,\, \phi)\,=\, (TX,\, {\rm Id}_{TX})$, a
holomorphic Lie algebroid connection on $E_G$ is a usual holomorphic connection on $E_G$.

There is a large body of research on Lie algebroids and Lie algebroid connections.
Bruzzo and Rubtsov investigated the cohomology and moduli spaces of skew-holomorphic 
Lie algebroids \cite{BR}. Tortella introduced modules over Lie algebroids and described moduli space
of flat Lie algebroid connections which are also called $\Lambda$–modules \cite{To1}, \cite{To2}. In \cite{AO},
Alfaya and Oliveira studied the moduli space of flat Lie algebroid connections and proved numerous properties
of the moduli space \cite{AO}. Bruzzo-Mencattini-Rubtsov-Tortella investigated extensions of Lie algebroids
\cite{BMRT}. Laurent-Gengoux, Sti\'enon and Xu investigate the relationships between holomorphic Lie
algebroids and holomorphic Poisson structures.

Our aim here is to give a criterion for the existence of equivariant holomorphic Lie algebroid connections on
an equivariant holomorphic principal $G$--bundle over $X$. We prove the following (see Theorem \ref{thm1}):

\begin{theorem}\label{thm-i}
\mbox{}
\begin{itemize}
\item Let $(V,\,\phi)$ be a nonsplit $\Gamma$--equivariant Lie algebroid. Then any
equivariant principal $G$--bundle over $X$ admits an equivariant holomorphic Lie algebroid connection.

\item Let $(V,\, \phi)$ be a split $\Gamma$--equivariant Lie algebroid.
Let $E_G$ be an equivariant principal $G$--bundle over $X$. The following four statements
are equivalent:
\begin{enumerate}
\item $E_G$ over $X$ admits an equivariant holomorphic Lie algebroid connection.

\item $E_G$ admits an equivariant holomorphic connection.

\item $E_G$ admits a holomorphic connection.

\item For every triple $(P,\, L(P),\, \chi)$, where $L(P)$ is a Levi subgroup of a parabolic subgroup
$P\, \subset\, G$ and $\chi$ is a holomorphic character of $L(P)$,
and every $\Gamma$--equivariant holomorphic reduction of structure group $E_{L(P)}$ of $E_G$ to $L(P)$,
the degree of the line bundle over $X$ associated to $E_{L(P)}$ for $\chi$ is zero.
\end{enumerate}
\end{itemize}
\end{theorem}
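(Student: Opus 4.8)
The plan is to recast an equivariant holomorphic Lie algebroid connection as the vanishing of a single cohomology class and then analyse that class by Serre duality. First I would identify $\mathcal{A}(E_G)$ with the fibered product of the Atiyah bundle of $E_G$ and $V$ over $TX$, formed from the anchor $\phi\colon V\to TX$ and the projection $\mathrm{At}(E_G)\to TX$; then the displayed extension of $V$ by $\mathrm{ad}(E_G)$ is the pullback along $\phi$ of the Atiyah extension, so its class is $\phi^{*}\mathrm{at}(E_G)\in H^{1}(X,\mathcal{H}om(V,\mathrm{ad}(E_G)))$, where $\mathrm{at}(E_G)\in H^{1}(X,\mathrm{ad}(E_G)\otimes\Omega_X)$ is the Atiyah class. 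An equivariant holomorphic Lie algebroid connection is exactly an equivariant $\mathcal{O}_X$--linear splitting of $\rho$, so one exists if and only if the equivariant class $\phi^{*}\mathrm{at}(E_G)\in H^{1}(X,\mathcal{H}om(V,\mathrm{ad}(E_G)))^{\Gamma}$ vanishes; since $|\Gamma|$ is invertible the map $H^{1}(X,-)^{\Gamma}\hookrightarrow H^{1}(X,-)$ is injective and $\mathrm{at}(E_G)$ is canonical, hence $\Gamma$--invariant, so the $\Gamma$--action is immaterial here and (1) holds if and only if $\phi^{*}\mathrm{at}(E_G)=0$ in $H^{1}(X,\mathcal{H}om(V,\mathrm{ad}(E_G)))$. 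The same remark for (2) and (3) is what will reduce their mutual equivalence to the non--equivariant situation together with averaging of connections over $\Gamma$.

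Fixing a nondegenerate invariant form $B$ on $\mathfrak{g}$ to identify $\mathrm{ad}(E_G)$ with its dual, Serre duality turns the vanishing of $\phi^{*}\mathrm{at}(E_G)$ into the statement that $\langle\mathrm{at}(E_G),t\rangle=0$ for every $t$ in the image of the natural map $\bar\phi\colon H^{0}(X,\mathrm{ad}(E_G)\otimes V\otimes\Omega_X)\to H^{0}(X,\mathrm{ad}(E_G))$ induced by $\phi\otimes\mathrm{id}_{\Omega_X}\colon V\otimes\Omega_X\to TX\otimes\Omega_X=\mathcal{O}_X$, where $\langle\,\cdot\,,\,\cdot\,\rangle$ is the Serre pairing $H^{1}(X,\mathrm{ad}(E_G)\otimes\Omega_X)\times H^{0}(X,\mathrm{ad}(E_G))\to\mathbb{C}$ --- which for a section $t$ is precisely the obstruction pairing underlying the Atiyah--Weil theory. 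In the split case I would choose an equivariant $\eta$ with $\phi\eta=\mathrm{Id}$; then $\phi^{*}\colon\Omega_X\to V^{*}$ is split injective (by $\eta^{*}$), so $\phi^{*}\mathrm{at}(E_G)=0$ if and only if $\mathrm{at}(E_G)=0$, i.e.\ if and only if $E_G$ admits a holomorphic connection; this gives (1)$\Leftrightarrow$(2)$\Leftrightarrow$(3) at once, and (3)$\Leftrightarrow$(4) is the Azad--Biswas principal--bundle form of the Atiyah--Weil criterion \cite{AB}. The one delicate point is that (4) constrains only $\Gamma$--equivariant reductions, which I would settle either by running the Azad--Biswas reduction to a maximal reductive subgroup $\Gamma$--equivariantly, or by passing to the associated parabolic $G$--bundle on $X/\Gamma$ --- where equivariant connections become parabolic connections and the degrees appearing in (4) become parabolic degrees --- and invoking the parabolic Atiyah--Weil theorem.

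The core of the theorem is the nonsplit case, where the claim is that the image of $\bar\phi$ consists entirely of \emph{nilpotent} sections of $\mathrm{ad}(E_G)$; granting this, together with the fact that $\langle\mathrm{at}(E_G),t\rangle=0$ for every nilpotent section $t$ (a known consequence of the Atiyah--Weil analysis --- for instance by degenerating $E_G$ to the associated graded of the filtration $\ker t\subset\ker t^{2}\subset\cdots$, along which the pairing is homogeneous of incompatible weights), one gets $\phi^{*}\mathrm{at}(E_G)=0$ and hence the desired connection. The claim is proved in two cases. If $\phi$ is not surjective (in particular if $\phi=0$) its image is $TX(-D)$ for a nonempty effective divisor $D$, so $\bar\phi$ factors through $H^{0}(X,\mathrm{ad}(E_G)(-D))$; a section $t$ vanishing at a point $p$ has semisimple part vanishing at $p$ (a semisimple and a nilpotent element of $\mathfrak{g}$ with zero sum are both zero), and the eigenvalues of that semisimple part, in a faithful representation, are locally constant, so it is identically zero and $t$ is nilpotent. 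If $\phi$ is surjective but $0\to W\to V\xrightarrow{\phi}TX\to0$ (with $W=\ker\phi$) is nonsplit, with class $e(V)\neq0$ in $H^{1}(X,W\otimes\Omega_X)$, then from the long exact sequence of $0\to\mathrm{ad}(E_G)\otimes W\otimes\Omega_X\to\mathrm{ad}(E_G)\otimes V\otimes\Omega_X\to\mathrm{ad}(E_G)\to0$ the image of $\bar\phi$ equals the kernel of $t\mapsto t\cup e(V)$; if such a $t$ were not nilpotent, its semisimple part would reduce $E_G$ to $M=Z_G(a)$, with Lie algebra $\mathfrak{m}$, for some nonzero semisimple $a\in\mathfrak{z}(\mathfrak{m})$, both Jordan components of $t$ would be sections of $\mathrm{ad}(E_M)$, and contracting $t\cup e(V)=0$ with a central element $a^{*}$ satisfying $B(a,a^{*})\neq0$ would give $B(a,a^{*})\,e(V)=0$, contradicting $e(V)\neq0$. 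I expect the main obstacle to be exactly this control of $\mathrm{image}(\bar\phi)$ in the surjective nonsplit subcase, together with the companion vanishing of the Atiyah pairing on nilpotent sections; on the split side, the analogous difficulty is the equivariant upgrade needed for (4)$\Rightarrow$(3).
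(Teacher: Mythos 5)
Your proposal is correct, and on the split side it is essentially the paper's argument in cohomological dress: identifying $\mathcal{A}(E_G)$ as the fibered product so that the class of $\rho$ is $\phi^{*}$ of the Atiyah class, noting that a splitting $\eta$ of the anchor makes $\phi^{*}$ split injective so the two obstructions vanish together, averaging over $\Gamma$, and invoking \cite{AB} for the degree criterion --- this is Proposition \ref{prop1}, Lemma \ref{lem1} and Corollary \ref{cor-2}; your ``delicate point'' for (4)$\Rightarrow$(3) is disposed of in the paper by citing \cite[p.~274, Lemma 4.2]{Bi}, i.e.\ your second suggested fix. The genuine divergence is in the nonsplit case. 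The paper never works with $E_G$ directly: it passes to the canonical minimal Levi reduction $E_{L(P)}$ of \cite{BP} (Remark \ref{rem1}, Lemma \ref{lem2}), and the contradiction with nonsplitness is obtained because, by Proposition \ref{prop2}(2), a non-nilpotent invariant section has semisimple part coming from the center of $\ell(\mathfrak{p})$, so a character of $L(P)$ converts the Serre-dual section into an explicit splitting $TX\to V$ of the anchor. You instead stay on $E_G$ and show every section in the image of $\bar\phi$ is nilpotent, splitting into the two ways the anchor can fail to split: non-surjective $\phi$ forces image sections to vanish somewhere, hence (constant invariants) to be nilpotent; surjective $\phi$ with $e(V)\neq0$ forces a non-nilpotent $t\in\ker(\cup\,e(V))$ to reduce $E_G$ to $Z_G(a)$, and contracting with the section determined by a dual central element $a^{*}$ gives $B(a,a^{*})\,e(V)=0$ --- which checks out, since nilpotents of $\mathfrak{m}$ lie in $[\mathfrak{m},\mathfrak{m}]$, which is $B$-orthogonal to $\mathcal{Z}(\mathfrak{m})$. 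Both routes rest on the same inputs (vanishing of the Atiyah pairing on nilpotent sections, \cite[p.~341, Proposition 3.9]{AB}, plus nonsplitness), but yours trades the automorphism-group analysis of \cite{BP} for explicit use of $e(V)$, gaining self-containedness at the cost of a case split and of having to justify, as the paper also must, that the Jordan semisimple part of a section with constant invariants is holomorphic. Do note that Definition \ref{def1} only excludes \emph{equivariant} splittings, so you need Lemma \ref{lem0} to conclude $e(V)\neq0$ in the full, non-invariant $H^{1}$.
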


In \cite{ABKS}, Theorem \ref{thm-i} was proved under the assumption that $G\,=\, \text{GL}(r,{\mathbb C})$.

There is a natural bijective correspondence between parabolic $G$--bundles on $X/\Gamma$ and $\Gamma$--equivariant
principal $G$--bundles over $X$. Using this correspondence, Theorem \ref{thm-i} translates into the following
(see Theorem \ref{thm2}):

\begin{theorem}\label{thm-i2}
Let $Y$ be a compact connected Riemann surface and $\{s_1,\, \cdots,\, s_n\}\, \subset\, Y$ a
parabolic divisor. Fix an integer $N_i\, \geq\, 2$ for each $s_i$, $1\, \leq\, i\, \leq\, n$.
\begin{itemize}
\item Let $(V_*,\,\phi)$ be a nonsplit parabolic Lie algebroid on $Y$. Then any
parabolic $G$--bundle on $Y$ admits a parabolic Lie algebroid connection.

\item Let $(V_*,\, \phi)$ be a split parabolic Lie algebroid on $Y$.
Let ${\mathcal E}_G$ be a parabolic $G$--bundle on $Y$. The following three statements
are equivalent:
\begin{enumerate}
\item ${\mathcal E}_G$ admits a parabolic Lie algebroid connection.

\item ${\mathcal E}_G$ admits a parabolic holomorphic connection.

\item For every triple $(P,\, L(P),\, \chi)$, where $L(P)$ is a Levi subgroup of a parabolic subgroup
$P\, \subset\, G$ and $\chi$ is a holomorphic character of $L(P)$, and every
holomorphic reduction of structure group ${\mathcal E}_{L(P)}$ of ${\mathcal E}_G$ to $L(P)$, the
parabolic line bundle over $X$ associated to ${\mathcal E}_{L(P)}$ for $\chi$ has parabolic degree zero.
\end{enumerate}
\end{itemize}
\end{theorem}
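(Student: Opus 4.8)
The plan is to deduce Theorem \ref{thm-i2} from Theorem \ref{thm-i} by means of the standard dictionary relating parabolic objects on $Y$ with $\Gamma$--equivariant objects on a ramified Galois covering of $Y$. First I would invoke the covering lemma of Kawamata and Namba to obtain a compact connected Riemann surface $X$ together with a faithful action of a finite group $\Gamma$ such that the quotient map $\pi\,:\, X\,\longrightarrow\, X/\Gamma\,=\, Y$ is ramified over $\{s_1,\, \cdots,\, s_n\}$ with the ramification index over each $s_i$ divisible by $N_i$. In this situation the category of parabolic $G$--bundles on $Y$ whose parabolic weights at $s_i$ have denominator dividing $N_i$ is equivalent to the category of $\Gamma$--equivariant holomorphic principal $G$--bundles on $X$, and this equivalence respects associated constructions, tensor operations, reductions of structure group, and degrees, with the parabolic degree of a parabolic bundle on $Y$ equal to $|\Gamma|^{-1}$ times the degree on $X$ of the corresponding equivariant bundle. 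The parabolic Lie algebroid $(V_*,\,\phi)$ and the parabolic $G$--bundle ${\mathcal E}_G$ in the statement are, by hypothesis, of this admissible type.

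Next I would carry each piece of data across the equivalence. The parabolic Lie algebroid $(V_*,\,\phi)$ corresponds to a $\Gamma$--equivariant holomorphic Lie algebroid $(V,\,\phi)$ on $X$, with the parabolic (orbifold) tangent Lie algebroid of $(Y,\, \sum s_i,\, \{N_i\})$ going over to $(TX,\, {\rm Id}_{TX})$; a parabolic splitting of $\phi$ on $Y$ then corresponds to a $\Gamma$--equivariant splitting $TX\,\longrightarrow\, V$ of $\phi$, so that $(V_*,\,\phi)$ is split (respectively, nonsplit) if and only if $(V,\,\phi)$ is split (respectively, nonsplit). Applying the Atiyah bundle construction of the Introduction to $E_G$ and $(V,\,\phi)$ yields a $\Gamma$--equivariant Lie algebra bundle ${\mathcal A}(E_G)$ whose short exact sequence $0\,\longrightarrow\, {\rm ad}(E_G)\,\longrightarrow\, {\mathcal A}(E_G)\,\longrightarrow\, V\,\longrightarrow\, 0$ is the image of the analogous parabolic sequence for ${\mathcal E}_G$ and $(V_*,\,\phi)$; hence parabolic Lie algebroid connections on ${\mathcal E}_G$ correspond bijectively to $\Gamma$--equivariant holomorphic Lie algebroid connections on $E_G$, and, specializing the algebroid to the tangent one, parabolic holomorphic connections on ${\mathcal E}_G$ correspond to $\Gamma$--equivariant holomorphic connections on $E_G$.

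For the remaining piece, a holomorphic reduction ${\mathcal E}_{L(P)}$ of ${\mathcal E}_G$ to a Levi subgroup $L(P)$ of a parabolic subgroup $P\,\subset\, G$ corresponds to a $\Gamma$--equivariant holomorphic reduction $E_{L(P)}$ of $E_G$ to $L(P)$; for a holomorphic character $\chi$ of $L(P)$, the parabolic line bundle on $Y$ associated to ${\mathcal E}_{L(P)}$ via $\chi$ corresponds to the $\Gamma$--equivariant line bundle on $X$ associated to $E_{L(P)}$ via $\chi$, so its parabolic degree, being $|\Gamma|^{-1}$ times the degree of the latter, vanishes exactly when the latter does. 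With these identifications the theorem follows from Theorem \ref{thm-i}. If $(V_*,\,\phi)$ is nonsplit, then $(V,\,\phi)$ is nonsplit, so by the first part of Theorem \ref{thm-i} every $\Gamma$--equivariant principal $G$--bundle on $X$ admits a $\Gamma$--equivariant holomorphic Lie algebroid connection, and transporting back, every parabolic $G$--bundle on $Y$ admits a parabolic Lie algebroid connection. If $(V_*,\,\phi)$ is split, then $(V,\,\phi)$ is split, and the second part of Theorem \ref{thm-i} applied to $E_G$ gives the equivalence of its statements (1), (2) and (4), which the translations above identify with statements (1), (2) and (3) of Theorem \ref{thm-i2} for ${\mathcal E}_G$.

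I expect the one place requiring genuine care — not a deep obstacle, but real bookkeeping — to be the verification that the equivalence of categories intertwines the Lie algebroid structures: that the Lie bracket on the sheaf of sections of $V_*$ goes over to the bracket on sections of $V$, that the anchors match, and that the parabolic Atiyah exact sequence is set up so as to be carried precisely onto the $\Gamma$--equivariant one. Once the notions of parabolic Lie algebroid and parabolic Lie algebroid connection are formulated so that these compatibilities hold (essentially by definition, extending the established parabolic--equivariant dictionary), the rest of the argument is formal.
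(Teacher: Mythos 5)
Your proposal is correct and follows essentially the same route as the paper: the paper likewise constructs a ramified Galois covering with the prescribed ramification indices (citing Namba), invokes the established dictionary between parabolic $G$--bundles and $\Gamma$--equivariant principal $G$--bundles (which carries over Lie algebroids, connections, reductions to Levi subgroups, and degrees), and then reads off the statement from Theorem \ref{thm-i}. The bookkeeping point you flag --- that the correspondence intertwines the algebroid brackets, anchors, and Atiyah sequences --- is exactly what the paper relies on as well, treating it as part of the standard parabolic--equivariant correspondence.
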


\section{Equivariant Lie algebroids}\label{sec:Equivariant Lie algebroids}

Let $X$ be a compact connected Riemann surface. Denote by $\text{Aut}(X)$ the group of all
holomorphic automorphisms of $X$. Fix a finite subgroup
\begin{equation}\label{e1}
\Gamma \ \subset\ \text{Aut}(X).
\end{equation}
So the group $\Gamma$ has a tautological action on $X$.

Let $\mathbb G$ be a complex Lie group. Note that in the introduction $G$ was
a reductive affine algebraic group defined over $\mathbb C$.
An \textit{equivariant} principal ${\mathbb G}$--bundle
over $X$ is a holomorphic principal $\mathbb G$--bundle
\begin{equation}\label{e2}
p\ :\ E_{\mathbb G}\ \longrightarrow\ X
\end{equation}
over $X$ together with an action of $\Gamma$ on $E_{\mathbb G}$ such that
\begin{enumerate}
\item for every $\gamma\, \in\, \Gamma$, the automorphism of $E_{\mathbb G}$ given by the action of
$\gamma$ is holomorphic,

\item the projection $p$ in \eqref{e2} is $\Gamma$--equivariant, and

\item the actions of $\mathbb G$ and $\Gamma$ on $E_{\mathbb G}$ commute.
\end{enumerate}
A holomorphic vector bundle $V$ of rank $r$ over $X$ is called equivariant if the corresponding holomorphic
principal ${\rm GL}(r, {\mathbb C})$--bundle over $X$, given by the frames in the fibers of $V$,
is equipped with an action of $\Gamma$ that satisfies the above three conditions. This is equivalent to
an action of $\Gamma$ on $V$, via holomorphic vector bundle automorphisms, over the action of $\Gamma$
on $X$.

The holomorphic tangent bundle
of $X$ will be denoted by $TX$, while the holomorphic cotangent bundle
of $X$ will be denoted by $K_X$. Using the action of $\Gamma$ on $X$, both $TX$ and $K_X$
are equivariant line bundles.

The first jet bundle of a holomorphic vector bundle
$W$ over $X$ will be denoted by $J^1(W)$.
An equivariant $\mathbb{C}$--Lie algebra structure on an equivariant vector
bundle $V$ over $X$ is a $\mathbb{C}$--bilinear pairing defined by a sheaf homomorphism
$$
[-,\, -] \,\,:\,\, V\otimes_{\mathbb C} V \,\, \longrightarrow\,\, V,
$$
which is given by a holomorphic homomorphism $J^1(V)\otimes J^1(V)\, \longrightarrow\, V$
of vector bundles, such that
\begin{enumerate}
\item $[\gamma(s),\, \gamma(t)]\,=\, \gamma([s,\, t])$ for all $\gamma\, \in\, \Gamma$, and

\item $[s,\, t]\,=\, -[t,\, s]$ and $[[s,\, t],\, u]+[[t,\, u],\, s]+[[u,\, s],\, t]\,=\,0$
for all locally defined holomorphic sections $s,\, t,\, u$ of $V$.
\end{enumerate}
The Lie bracket operation on the sheaf of holomorphic vector fields on $X$ gives the structure of
an equivariant $\mathbb C$--Lie algebra on $TX$.

An equivariant Lie algebroid over $X$ is a pair $(V,\, \phi)$, where
\begin{enumerate}
\item $V$ is an equivariant vector bundle over $X$ equipped with the structure of an
equivariant $\mathbb{C}$--Lie algebra, and

\item $\phi\, :\, V\, \longrightarrow\, TX$ is a $\Gamma$--equivariant ${\mathcal O}_X$--linear homomorphism
such that
\begin{equation}\label{esc}
[s,\, f\cdot t]\ =\ f\cdot [s,\, t]+\phi(s)(f)\cdot t
\end{equation}
for all locally defined holomorphic sections
$s,\, t$ of $V$ and all locally defined holomorphic functions $f$ on $X$.
\end{enumerate}
The above homomorphism $\phi$ is called the \textit{anchor map} of the Lie algebroid.
The two conditions in the definition of a Lie algebroid imply that
\begin{equation}\label{e5}
\phi([s,\, t])\ =\ [\phi(s),\, \phi(t)]
\end{equation}
for all locally defined holomorphic sections $s,\, t$ of $V$; this is explained in Remark
\ref{rem-e} below.

\begin{remark}\label{rem-e}
To show that \eqref{e5} holds for $(V,\, \phi)$, note that for
all holomorphic local sections $s,\,t,\,u$ of $V$ and each locally defined
holomorphic function $f$ in $\mathcal{O}_X$ we have
\begin{equation}\label{b1}
[[s,\,t],\,fu]\ =\ f[[s,\,t],\,u]+\phi([s,\,t])(f)\cdot u
\end{equation}
(see \eqref{esc}). On the other hand,
$$
[[s,\,t],\,fu]\,=\,[[s,\,fu],\,t]+[s,\,[t,\,fu]]\ =\ [f[s,\,u]+\phi(s)(f)u,\,t]
$$
$$
+[s,\,f[t,\,u]+\phi(t)(f) u] \ =\ f[[s,\,u],\,t]-\phi(t)(f)[s,\,u]+\phi(s)(f)[u,\,t]
$$
$$
-\phi(t)(\phi(s)(f)) u + f[s,\,[t,\,u]]+\phi(s)(f)[t,\,u] +\phi(t)(f)[s,\,u]+\phi(s)(\phi(t)(f))u
$$
$$
=\ f[[s,\,t],\,u]+\left( \phi(s)(\phi(t)(f))-\phi(t)(\phi(s)(f))\right) u
$$
\begin{equation}\label{b2}
=\, f[[s,\,t],\,u]+[\phi(s),\, \phi(t)](f)\cdot u.
\end{equation}
Combining \eqref{b1} and \eqref{b2} we conclude that $\phi([s,\,t])(f)\cdot  u\,=\, [\phi(s),\,
\phi(t)](f)\cdot u$. Since this holds for all locally defined $f$ and $u$, it follows that
$$\phi([s,\,t])\,\ =\,\ [\phi(s),\,\phi(t)].$$
This proves \eqref{e5}.
\end{remark}

\begin{definition}\label{def1}
An equivariant Lie algebroid $(V,\, \phi)$ over $X$ will be called \textit{split} if there is
a $\Gamma$--equivariant ${\mathcal O}_X$--linear homomorphism
$$
\rho\ :\ TX \ \longrightarrow\ V
$$
such that $\phi\circ\rho\,=\, {\rm Id}_{TX}$. An equivariant Lie algebroid $(V,\, \phi)$ over $X$ will
be called \textit{nonsplit} if it is not split.
\end{definition}

\begin{lemma}\label{lem0}
Let $(V,\, \phi)$ be an equivariant Lie algebroid over $X$.
If there is an ${\mathcal O}_X$--linear homomorphism
$$
\zeta\ :\ TX \ \longrightarrow\ V
$$
such that $\phi\circ\zeta\,=\, {\rm Id}_{TX}$, then there is
a $\Gamma$--equivariant ${\mathcal O}_X$--linear homomorphism
$$
\widehat{\zeta}\ :\ TX \ \longrightarrow\ V
$$
such that $\phi\circ\widehat{\zeta}\,=\, {\rm Id}_{TX}$.
\end{lemma}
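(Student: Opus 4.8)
The plan is to average the given splitting $\zeta$ over the finite group $\Gamma$ to produce a $\Gamma$--equivariant splitting. First I would observe that the set of $\mathcal{O}_X$--linear splittings of the anchor map $\phi$ — that is, homomorphisms $\zeta\colon TX\to V$ with $\phi\circ\zeta=\mathrm{Id}_{TX}$ — is, whenever nonempty, an affine space over the $\mathbb{C}$--vector space $H^0(X,\, \mathrm{Hom}(TX,\,\ker\phi))$; indeed the difference of two splittings lands in $\ker\phi$ and is $\mathcal{O}_X$--linear. Since $V$, $TX$, $\phi$ and the Lie algebroid structure are all $\Gamma$--equivariant, for each $\gamma\in\Gamma$ the conjugate $\gamma_*\zeta := \gamma_V\circ\zeta\circ(\gamma_{TX})^{-1}$, using the $\Gamma$--actions on the respective bundles covering the action on $X$, is again an $\mathcal{O}_X$--linear splitting of $\phi$: one checks $\phi\circ(\gamma_*\zeta) = \gamma_{TX}\circ\phi\circ\zeta\circ(\gamma_{TX})^{-1} = \mathrm{Id}_{TX}$, using that $\phi$ intertwines the $\Gamma$--actions on $V$ and $TX$.

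Then I would set
\[
\widehat{\zeta}\ :=\ \frac{1}{|\Gamma|}\sum_{\gamma\in\Gamma}\gamma_*\zeta\,,
\]
which makes sense because we are in characteristic zero and the space of splittings is an affine space over a $\mathbb{C}$--vector space, so this convex combination is again a splitting: $\phi\circ\widehat{\zeta} = \frac{1}{|\Gamma|}\sum_\gamma \phi\circ(\gamma_*\zeta) = \mathrm{Id}_{TX}$. The remaining point is $\Gamma$--equivariance of $\widehat{\zeta}$, i.e. $\delta_*\widehat{\zeta} = \widehat{\zeta}$ for every $\delta\in\Gamma$; this follows from $\delta_*(\gamma_*\zeta) = (\delta\gamma)_*\zeta$ (which in turn follows from the cocycle/compatibility relations among the bundle automorphisms defining the equivariant structures on $V$ and $TX$) together with the fact that $\gamma\mapsto\delta\gamma$ permutes $\Gamma$, so the averaged sum is unchanged.

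There is no serious obstacle here; the only thing requiring a little care is the bookkeeping of the $\Gamma$--actions — making precise that $\zeta\mapsto\gamma_*\zeta$ defines a left action of $\Gamma$ on the affine space of splittings, for which one must verify $\delta_*(\gamma_*\zeta)=(\delta\gamma)_*\zeta$ from the definition of an equivariant vector bundle as an action by holomorphic automorphisms over the $\Gamma$--action on $X$. Once that is set up, the averaging argument is standard and completes the proof. Note in particular that combined with Definition \ref{def1} this shows that the existence of \emph{any} (not necessarily equivariant) $\mathcal{O}_X$--linear splitting of $\phi$ already forces $(V,\,\phi)$ to be split in the equivariant sense.
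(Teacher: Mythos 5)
Your proposal is correct and is essentially identical to the paper's proof: the paper also conjugates $\zeta$ by the $\Gamma$--actions on $TX$ and $V$ to get splittings $\zeta_\gamma$ and then averages $\widehat{\zeta}=\frac{1}{\#\Gamma}\sum_{\gamma\in\Gamma}\zeta_\gamma$. The extra remarks you make (the affine-space structure on the set of splittings, and the verification that $\gamma\mapsto\gamma_*$ is a group action) are correct elaborations of details the paper leaves implicit.
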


\begin{proof}
Let $\zeta\, :\, TX \, \longrightarrow\, V$ be an ${\mathcal O}_X$--linear homomorphism
such that $\phi\circ\zeta\,=\, {\rm Id}_{TX}$. For each $\gamma\,\in\, \Gamma$, let
$$
\zeta_\gamma\ :\ TX \ \longrightarrow\ V
$$
be the homomorphism given by the following composition of maps:
$$
TX \ \xrightarrow{\,\,\, \gamma\cdot\,\,\,}\ TX\ \xrightarrow{\,\,\,\zeta\,\,\,}\ V
\ \xrightarrow{\,\,\, \gamma^{-1}\cdot\,\,\,}\ V,
$$
where $\gamma\cdot$ (respectively, $\gamma^{-1}\cdot$) is the action of $\gamma$
(respectively, $\gamma^{-1}$) on $TX$ (respectively, $V$). Then the homomorphism
$$
\widehat{\zeta}\, :=\ \frac{1}{\#\Gamma} \sum_{\gamma\in \Gamma} \zeta_\gamma
\ :\ TX \ \longrightarrow\ V,
$$
where $\# \Gamma$ is the order of $\Gamma$, is clearly $\Gamma$--equivariant and it also satisfies
the condition that $\phi\circ\widehat{\zeta}\,=\, {\rm Id}_{TX}$.
\end{proof}

\section{Lie algebroid connection on principal bundles}\label{sec:Lie algebroid connection on principal bundles}

As before, $\mathbb G$ is a complex Lie group. Take an
equivariant principal $\mathbb G$--bundle $p\, :\, E_{\mathbb G}\, \longrightarrow\, X$ (see \eqref{e2}). The
action of $\mathbb G$
on $E_{\mathbb G}$ produces an action of $\mathbb G$ on the direct image $p_*TE_{\mathbb G}$ of
the holomorphic tangent bundle $TE_{\mathbb G}$. The quotient
\begin{equation}\label{e6}
\psi\ :\ \text{At}(E_{\mathbb G})\ :=\ (p_*TE_{\mathbb G})/{\mathbb G} \ \longrightarrow\ X
\end{equation}
is the Atiyah bundle for $E_{\mathbb G}$ \cite{At}. It fits in a short exact sequence of holomorphic vector bundles
\begin{equation}\label{e7}
0\, \longrightarrow\, \text{ad}(E_{\mathbb G})\, \xrightarrow{\,\,\,\iota\,\,\,} \, \text{At}(E_{\mathbb G})
\, \xrightarrow{\,\,\,\varpi\,\,\,} \, TX\, \longrightarrow\, 0,
\end{equation}
where $\text{ad}(E_{\mathbb G})$ is the adjoint vector bundle for $E_{\mathbb G}$ (see \cite{At}); the projection
$\varpi$ in \eqref{e7} is given by the differential $dp\, :\, TE_{\mathbb G}\, \longrightarrow\, p^*TX$
of the map $p$. The sequence in \eqref{e7} is known as the Atiyah exact sequence for $E_{\mathbb G}$.

The Lie bracket operation on the sheaf of holomorphic vector fields on
$E_{\mathbb G}$ produces
a $\mathbb C$--Lie algebra structure on $\text{At}(E_{\mathbb G})$. The homomorphism $\varpi$ in \eqref{e7}
intertwines the $\mathbb C$--Lie algebra structures of $\text{At}(E_{\mathbb G})$ and $TX$. In fact,
$(\text{At}(E_{\mathbb G}), \,\varpi)$ is a Lie algebroid.

A holomorphic connection on the principal $\mathbb G$--bundle $E_{\mathbb G}$ is a holomorphic splitting
of the Atiyah exact sequence in \eqref{e7} \cite{At}. In other words, a holomorphic connection on
$E_{\mathbb G}$ is a holomorphic ${\mathcal O}_X$--linear homomorphism $\mu\, :\, TX\, \longrightarrow\,
\text{At}(E_{\mathbb G})$ such that $\varpi\circ\mu \,=\, \text{Id}_{TX}$, where $\varpi$ is the
homomorphism in \eqref{e7}.

\begin{example}\label{ex1}
Assume that $E_{\mathbb G}$ does not admit any holomorphic connection. For example, set ${\mathbb G}\,=\,
\text{GL}(r,{\mathbb C})$ and take $E_{\mathbb G}$ to be the holomorphic principal
$\text{GL}(r,{\mathbb C})$--bundle over $X$ associated to a holomorphic vector bundle of rank
$r$ and nonzero degree over $X$. Then the Lie algebroid $(\text{At}(E_{\mathbb G}),\, \varpi)$
in \eqref{e7} is nonsplit.

On the other hand, if $E_{\mathbb G}$ admits a holomorphic connection, then
the Lie algebroid $(\text{At}(E_{\mathbb G}),\, \varpi)$
in \eqref{e7} is split. For example, take any indecomposable holomorphic vector bundle $E$ over $X$
of rank $r$ with $\text{degree}(E)\,=\, 0$. Then $E$ admits a holomorphic connection \cite{At}, \cite{We}.
Hence the holomorphic principal $\text{GL}(r,{\mathbb C})$--bundle $E_{\text{GL}(r,{\mathbb C})}$ over $X$
associated to $E$ admits a holomorphic connection. Consequently, the Lie algebroid
given by $\text{At}(E_{\text{GL}(r,{\mathbb C})})$ (see \eqref{e7}) is split.
\end{example}

The action of $\Gamma$ on $E_{\mathbb G}$ makes $\text{At}(E_{\mathbb G})$ an equivariant vector bundle. The 
homomorphism $\varpi$ in \eqref{e7} is $\Gamma$--equivariant. Thus $(\text{At}(E_{\mathbb G}), \,\varpi)$ is
an equivariant Lie algebroid. The action of $\Gamma$ on $E_{\mathbb G}$ produces an action of $\Gamma$
on $\text{ad}(E_{\mathbb G})$, and the homomorphism $\iota$ in \eqref{e7} is $\Gamma$--equivariant.

Take an equivariant Lie algebroid $(V,\, \phi)$ over $X$. Consider the homomorphism
$$
\psi\ :\ V\oplus \text{At}(E_{\mathbb G}) \ \longrightarrow\ TX, \ \ \, (v,\, w)\
\longmapsto\ \phi(v) - \varpi(w),
$$
where $\varpi$ is the homomorphism in \eqref{e7}. Note that $\psi$ is surjective because $\varpi$
is surjective. Define
\begin{equation}\label{e8}
\mathcal{A}(E_{\mathbb G}) \ :=\ \text{kernel}(\psi)\ \subset\ V\oplus \text{At}(E_{\mathbb G}).
\end{equation}
The $\mathbb C$--Lie algebra structure on $V\oplus \text{At}(E_{\mathbb G})$, given by the
$\mathbb C$--Lie algebra structures on $V$ and $\text{At}(E_{\mathbb G})$, restricts to a
$\mathbb C$--Lie algebra structure on $\mathcal{A}(E_{\mathbb G})$.
Restricting the natural projection $V\oplus \text{At}(E_{\mathbb G})\, \longrightarrow\, V$ to
$\mathcal{A}(E_{\mathbb G})\, \subset\, V\oplus \text{At}(E_{\mathbb G})$ we obtain a homomorphism
\begin{equation}\label{e9}
\rho\ :\ \mathcal{A}(E_{\mathbb G}) \ \longrightarrow\ V;
\end{equation}
note that $\text{kernel}(\rho) \,=\, \text{kernel}(\varpi)\,=\, \text{ad}(E_{\mathbb G})$.
Similarly, restricting the natural projection $V\oplus \text{At}(E_{\mathbb G})\, \longrightarrow\,
\text{At}(E_{\mathbb G})$
to $\mathcal{A}(E_{\mathbb G})\, \subset\, V\oplus \text{At}(E_{\mathbb G})$ we obtain a homomorphism
\begin{equation}\label{e10}
\varphi\ :\ \mathcal{A}(E_{\mathbb G}) \ \longrightarrow\ \text{At}(E_{\mathbb G}).
\end{equation}
The action of $\Gamma$ on $V\oplus \text{At}(E_{\mathbb G})$, given by the actions of $\Gamma$ on $V$ and
$\text{At}(E_{\mathbb G})$, preserves the subbundle $\mathcal{A}(E_{\mathbb G})$.

We have the commutative
diagram of homomorphisms of vector bundles
\begin{equation}\label{e11}
\begin{matrix}
0 &\longrightarrow & {\rm ad}(E_{\mathbb G}) & \longrightarrow & {\mathcal A}(E_{\mathbb G}) &
\xrightarrow{\,\,\,\rho\,\,\,} & V & \longrightarrow & 0\\
&& \Big\Vert &&\,\,\, \Big\downarrow\varphi &&\,\,\, \Big\downarrow \phi\\
0 & \longrightarrow & {\rm ad}(E_{\mathbb G}) & \stackrel{\iota}{\longrightarrow} &
{\rm At}(E_{\mathbb G})& \xrightarrow{\,\,\,\varpi\,\,\,} & TX & \longrightarrow & 0
\end{matrix}
\end{equation}
where $\varphi$ and $\rho$ are constructed in \eqref{e10} and \eqref{e9} respectively.
Note that every vector bundle in \eqref{e11} is equipped with an action of $\Gamma$, and
all the homomorphisms in \eqref{e11} are $\Gamma$--equivariant.

\begin{definition}\label{def2}
An \textit{equivariant holomorphic Lie algebroid connection} on $E_{\mathbb G}$ is a $\Gamma$--equivariant
holomorphic homomorphism
$$
\delta\ : \ V\ \longrightarrow\ \mathcal{A}(E_{\mathbb G})
$$
such that $\rho\circ\delta\,=\, {\rm Id}_V$, where $\rho$ is the homomorphism in \eqref{e9}.
\end{definition}

Let $\delta$ be an equivariant holomorphic Lie algebroid connection on $E_{\mathbb G}$. For locally
defined holomorphic sections $s$ and $t$ of $V$, consider
$$
\alpha(s,\, t)\ :=\ [\delta(s),\, \delta(t)] - \delta([s,\, t]).
$$
For a locally defined holomorphic function $f$ on $X$,
$$
f\cdot \alpha(s,\, t)\,=\, \alpha(fs,\, t)\,=\, \alpha(s,\, ft)\,=\, -\alpha(ft,\, s).
$$
Also, $\rho(\alpha(s,\, t))\,=\, 0$, where $\rho$ is the homomorphism in \eqref{e9};
consequently, $\alpha(s,\, t)$ is a locally defined section of
$\text{ad}(E_{\mathbb G})$. From these it follows that $\alpha$ defines a
$\Gamma$--invariant holomorphic section
\begin{equation}\label{e12}
{\mathcal K}(\delta)\ \in\ H^0(X,\, \text{ad}(E_{\mathbb G})\otimes\bigwedge\nolimits^2 V^*)^\Gamma .
\end{equation}
The section ${\mathcal K}(\delta)$ in \eqref{e12} is the \textit{curvature} of the
equivariant holomorphic Lie algebroid connection $\delta$.

When $V\,=\, TX$ and $\phi\,=\, {\rm Id}_{TX}$, an equivariant holomorphic Lie algebroid connection on
$E_{\mathbb G}$ is a usual equivariant holomorphic connection on the principal
$\mathbb G$--bundle $E_{\mathbb G}$.

When ${\mathbb G}\,=\, \text{GL}(r,{\mathbb C})$, the notions of Lie algebroid connection and
curvature coincide with those for holomorphic vector bundles.

\section{Equivariant holomorphic connections and split Lie algebroids}\label{sec:Equivariant holomorphic 
connections and split Lie algebroids}

Earlier the notation $\mathbb G$ was used to denote a complex Lie group. Now-onwards, we will consider
principal bundles whose structure group is a connected reductive affine algebraic group defined
over $\mathbb C$. To distinguish it from a general complex Lie group, the notation $G$ will be used
instead of $\mathbb G$.
 
\subsection{Equivariant holomorphic connections}\label{subsec:Equivariant holomorphic connections}

Let $G$ be a connected reductive affine algebraic group defined over $\mathbb C$.
Take any parabolic subgroup $P\, \subset G$. Let $R_u(P)\, \subset\, P$ be the unipotent radical
of $P$. A \textit{Levi subgroup} of $P$ is a connected reductive complex algebraic subgroup
$L(P)\, \subset\, P$ such that the following composition of homomorphisms is an isomorphism:
$$
L(P)\ \hookrightarrow\ P \ \longrightarrow\ P/R_u(P)
$$
(see \cite[p.~125]{Hu}, \cite{Bo}).

Take a holomorphic principal $G$--bundle over $X$. Take a holomorphic character $\chi\, :\, L(P)\, 
\longrightarrow\, {\mathbb G}_m \,=\,{\mathbb C}^*$ of a Levi subgroup $L(P)$ of a parabolic subgroup $P$ of 
$G$. Let $E_{L(P)}\,\subset\, E_G$ be a holomorphic reduction of structure group of $E_G$ to $L(P)\, 
\subset\, G$. Let $E_{L(P)}\times^P {\mathbb C}^*$ be the holomorphic principal ${\mathbb C}^*$--bundle on 
$X$ obtained by extending the structure group of $E_{L(P)}$ using the character $\chi$. Using the standard 
multiplication action of ${\mathbb C}^*$ on $\mathbb C$, the principal ${\mathbb C}^*$--bundle 
$E_{L(P)}\times^P {\mathbb C}^*$ produces a holomorphic line bundle ${\mathcal L}(E_{L(P)},\chi) 
\,\longrightarrow\, X$.

The principal $G$--bundle $E_G$ admits a holomorphic connection if and only if for every triple
$(P,\, L(P),\, \chi)$ as above, and every holomorphic reduction of structure group $E_{L(P)}$ of
$E_G$ to $L(P)$, we have
$$
\text{degree}({\mathcal L}(E_{L(P)},\chi)) \ =\ 0
$$
\cite[Theorem 4.1]{AB}.

Let $E_G$ be an equivariant principal $G$--bundle over $X$. A reduction of structure group $E_{L(P)}$
of $E_G$ to $L(P)\, \subset\, G$ is called \textit{equivariant} if the action of $\Gamma$ on $E_G$ preserves the
submanifold $E_{L(P)}\, \subset\, E_G$. The following lemma gives a criterion for the existence of
an equivariant holomorphic connection on an equivariant principal $G$--bundle.

\begin{lemma}\label{lem1}
An equivariant principal $G$--bundle $E_G$ over $X$ admits an equivariant holomorphic connection if and
only if for every triple $(P,\, L(P),\, \chi)$ as above, and every $\Gamma$--equivariant holomorphic
reduction of structure group $E_{L(P)}$ of $E_G$ to $L(P)$,
\begin{equation}\label{e13}
{\rm degree}({\mathcal L}(E_{L(P)},\chi)) \ =\ 0.
\end{equation}
\end{lemma}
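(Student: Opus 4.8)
The plan is to reduce the equivariant statement to the non-equivariant criterion of \cite[Theorem 4.1]{AB} by passing to averages, exactly as in Lemma \ref{lem0}. First I would prove the "only if" direction, which is the easy half: if $E_G$ admits an equivariant holomorphic connection $\mu\,:\,TX\,\longrightarrow\,{\rm At}(E_G)$, then in particular it admits a (non-equivariant) holomorphic connection, so \cite[Theorem 4.1]{AB} forces ${\rm degree}({\mathcal L}(E_{L(P)},\chi))\,=\,0$ for \emph{every} holomorphic reduction $E_{L(P)}$, and a fortiori for every $\Gamma$--equivariant one. So \eqref{e13} holds.

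For the "if" direction, suppose \eqref{e13} holds for all triples $(P,\,L(P),\,\chi)$ and all $\Gamma$--equivariant reductions $E_{L(P)}$. The key observation is that the degree condition in \cite[Theorem 4.1]{AB} is really only tested on reductions coming from the \emph{Harder--Narasimhan / socle-type} data of $E_G$, which are canonical and hence $\Gamma$--invariant; more precisely, I would argue that it suffices to check the degree vanishing on reductions that are automatically $\Gamma$--equivariant. Concretely: if $E_G$ did \emph{not} admit a holomorphic connection, then by \cite[Theorem 4.1]{AB} there is a triple $(P,\,L(P),\,\chi)$ and a holomorphic reduction $E_{L(P)}$ with ${\rm degree}({\mathcal L}(E_{L(P)},\chi))\,\neq\,0$. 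Passing to the reduction attached to the Harder--Narasimhan reduction of $E_G$ (which is canonical, hence preserved by the $\Gamma$--action), one gets a $\Gamma$--equivariant reduction violating \eqref{e13} unless all the relevant degrees already vanish; this contradiction shows $E_G$ admits a holomorphic connection. Then, given a (non-equivariant) holomorphic connection $\mu$ on $E_G$, I would average it over $\Gamma$: for each $\gamma\,\in\,\Gamma$ form $\mu_\gamma\,:=\,(\gamma^{-1}\cdot)\circ\mu\circ(\gamma\cdot)\,:\,TX\,\longrightarrow\,{\rm At}(E_G)$, using the $\Gamma$--actions on $TX$ and on ${\rm At}(E_G)$ induced by the $\Gamma$--action on $E_G$, and set
$$
\widehat{\mu}\ :=\ \frac{1}{\#\Gamma}\sum_{\gamma\in\Gamma}\mu_\gamma\ :\ TX\ \longrightarrow\ {\rm At}(E_G).
$$
Since $\varpi$ is $\Gamma$--equivariant and $\varpi\circ\mu\,=\,{\rm Id}_{TX}$, each $\mu_\gamma$ satisfies $\varpi\circ\mu_\gamma\,=\,{\rm Id}_{TX}$, so $\varpi\circ\widehat{\mu}\,=\,{\rm Id}_{TX}$; and $\widehat{\mu}$ is $\Gamma$--equivariant by construction, exactly as in the proof of Lemma \ref{lem0}. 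Hence $\widehat{\mu}$ is an equivariant holomorphic connection on $E_G$.

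The main obstacle is the reduction step in the "if" direction: one must know that failure of the degree condition can always be witnessed by a \emph{canonical}, hence $\Gamma$--invariant, reduction. This is where the structure theory of $G$--bundles on curves enters — the Harder--Narasimhan reduction (to a canonical parabolic) and, within it, the admissible reduction to a Levi, are unique and therefore carried to themselves by any automorphism of $X$ lifting to $E_G$; in particular by every $\gamma\,\in\,\Gamma$. Once this canonicity is in hand, the argument closes: non-vanishing of some ${\rm degree}({\mathcal L}(E_{L(P)},\chi))$ propagates to non-vanishing for the canonical (equivariant) reduction, contradicting \eqref{e13}, so $E_G$ has a holomorphic connection, which we then average to an equivariant one. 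I would also remark that the averaging trick is precisely Lemma \ref{lem0} applied to the Lie algebroid $({\rm At}(E_G),\,\varpi)$, so that part requires no new work.
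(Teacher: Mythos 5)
Your ``only if'' direction and your final averaging step are exactly what the paper does (the averaging is the same computation as in Lemma \ref{lem0}, transplanted to the Atiyah exact sequence), so those parts are fine. The genuine gap is in the first half of your ``if'' direction: the claim that failure of the degree condition is always witnessed by a \emph{canonical}, hence $\Gamma$--invariant, reduction, with the Harder--Narasimhan reduction named as that canonical object. There are two problems with this. First, the Harder--Narasimhan reduction is a reduction to a parabolic subgroup, whereas the criterion of \cite{AB} is tested on reductions to \emph{Levi} subgroups; passing from the canonical parabolic to a Levi means splitting a filtration, which need not be possible (an indecomposable unstable vector bundle of degree zero, e.g.\ a nonsplit extension of $L^{-1}$ by $L$ with ${\rm degree}(L)\,>\,0$ on a curve of genus at least $2$, admits a holomorphic connection and admits no Levi reduction of its HN parabolic at all), and even when a splitting exists it is not canonical, so $\Gamma$--invariance does not follow. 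Second, the reduction that actually carries the obstruction --- the admissible Levi reduction recorded in Remark \ref{rem1} --- is canonical only up to translation by an element of $G$ (one has $L(P')\,=\,x^{-1}L(P)x$ and $E_{L(P')}\,=\,E_{L(P)}x$), so the assertion that it can be chosen $\Gamma$--equivariantly is not a formal consequence of uniqueness; it is the nontrivial equivariance theorem \cite[Theorem 4.1]{BP}.

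Moreover, even after an equivariant admissible reduction $E_{L(P)}$ is in hand, one must still show that non-existence of a holomorphic connection on $E_G$ forces some character $\chi$ of $L(P)$ to produce a line bundle of nonzero degree from \emph{this particular} reduction. That implication is the content of \cite[p.~274, Lemma 4.2]{Bi} (resting on \cite[Proposition 3.9]{AB}), which is precisely what the paper cites at this point of the proof and which your sketch asserts rather than proves. So the architecture of your argument matches the paper's, but the step ``equivariant degree condition $\Rightarrow$ some holomorphic connection exists'' needs either the citation to \cite{Bi} or a genuine proof along the lines of Remark \ref{rem1} and Proposition \ref{prop2}; the Harder--Narasimhan reduction alone does not close it.
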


\begin{proof}
First, assume that $E_G$ admits an equivariant holomorphic connection. Then, from the above criterion of
\cite{AB} it follows immediately that \eqref{e13} holds.

To prove converse, assume that \eqref{e13} holds for every triple $(P,\, L(P),\, \chi)$ as above, and
every $\Gamma$--equivariant holomorphic reduction of structure group $E_{L(P)}$ of $E_G$ to $L(P)$.
This implies that the principal $G$--bundle $E_G$ admits a holomorphic connection (see
\cite[p.~274, Lemma 4.2]{Bi}). Let
$$
\delta\ : \ TX\ \longrightarrow\ \text{At}(E_G)
$$
be a holomorphic connection on $E_G$; so we have $\varpi\circ\delta\,=\, {\rm Id}_{TX}$, where $\varpi$
is the homomorphism in \eqref{e7} (see \cite{At}). For any $\gamma\, \in\, \Gamma$, let
$$
\delta_\gamma\ : \ TX\ \longrightarrow\ \text{At}(E_G)
$$
be the homomorphism given by the following composition of maps:
$$
TX \, \xrightarrow{\,\,\, \gamma\cdot\,\,\,}\ TX \ \xrightarrow{\,\,\,\delta\,\,\,}\
\text{At}(E_G) \ \xrightarrow{\,\,\, \gamma^{-1}\cdot\,\,\,}\ \text{At}(E_G),
$$
where $\gamma\cdot$ (respectively, $\gamma^{-1}\cdot$) is the action of $\gamma$ (respectively,
$\gamma^{-1}$) on $TX$ (respectively, $\text{At}(E_G)$); recall that $\Gamma$ acts on
both $TX$ and $\text{At}(E_G)$.

Now consider the homomorphism
$$
\widehat{\delta}\ :=\ \frac{1}{\# \Gamma} \sum_{\gamma\in\Gamma} \delta_\gamma\ :\
TX\ \longrightarrow\ \text{At}(E_G).
$$
Since $\varpi\circ\delta\,=\, {\rm Id}_{TX}$, it follows immediately that
$\varpi\circ\widehat{\delta}\,=\, {\rm Id}_{TX}$. It is also evident that $\widehat{\delta}$
is $\Gamma$--equivariant. Consequently, $\widehat{\delta}$ is
an equivariant holomorphic connection on the equivariant principal $G$--bundle $E_G$.
\end{proof}

The second part of the proof of Lemma \ref{lem1} gives the following:

\begin{corollary}\label{cor-2}
An equivariant principal $G$--bundle $E_G$ admits a holomorphic connection if
and only if $E_G$ admits an equivariant holomorphic connection.
\end{corollary}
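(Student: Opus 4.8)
The plan is to note that one implication is trivial and the other is precisely the averaging construction already performed in the second half of the proof of Lemma \ref{lem1}, now stripped of any reference to reductions of structure group. For the easy direction: if $E_G$ admits an equivariant holomorphic connection $\widehat{\delta}\,:\,TX\,\longrightarrow\,\mathrm{At}(E_G)$, then $\widehat{\delta}$ is in particular a holomorphic ${\mathcal O}_X$--linear homomorphism with $\varpi\circ\widehat{\delta}\,=\,\mathrm{Id}_{TX}$, i.e.\ a holomorphic splitting of the Atiyah exact sequence \eqref{e7}, so $E_G$ admits a holomorphic connection. No equivariance is used here.

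For the converse, suppose $\delta\,:\,TX\,\longrightarrow\,\mathrm{At}(E_G)$ is a holomorphic connection, so $\varpi\circ\delta\,=\,\mathrm{Id}_{TX}$. Recall that $\Gamma$ acts on $TX$ (via its action on $X$) and on $\mathrm{At}(E_G)$ (via its action on $E_G$), and that $\varpi$ is $\Gamma$--equivariant. For each $\gamma\,\in\,\Gamma$ I would form the conjugated homomorphism $\delta_\gamma\,:=\,(\gamma^{-1}\cdot)\circ\delta\circ(\gamma\cdot)\,:\,TX\,\longrightarrow\,\mathrm{At}(E_G)$ and check that it is still a splitting: using $\varpi\circ(\gamma^{-1}\cdot)\,=\,(\gamma^{-1}\cdot)\circ\varpi$ we get $\varpi\circ\delta_\gamma\,=\,(\gamma^{-1}\cdot)\circ\varpi\circ\delta\circ(\gamma\cdot)\,=\,(\gamma^{-1}\cdot)\circ(\gamma\cdot)\,=\,\mathrm{Id}_{TX}$. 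Then the average $\widehat{\delta}\,:=\,\frac{1}{\#\Gamma}\sum_{\gamma\in\Gamma}\delta_\gamma$ again satisfies $\varpi\circ\widehat{\delta}\,=\,\mathrm{Id}_{TX}$ (an affine combination of splittings of the same surjection is a splitting), and it is $\Gamma$--equivariant by construction; hence it is an equivariant holomorphic connection on $E_G$.

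There is no genuine obstacle here. The only point that needs to be invoked is the compatibility of $\varpi$ with the $\Gamma$--actions on $TX$ and $\mathrm{At}(E_G)$, i.e.\ the fact (recorded in Section \ref{sec:Lie algebroid connection on principal bundles}) that $(\mathrm{At}(E_G),\,\varpi)$ is an equivariant Lie algebroid; everything else is formal. In the write-up I would simply say that the statement is immediate from the second part of the proof of Lemma \ref{lem1}, perhaps reproducing the two-line verification of $\varpi\circ\delta_\gamma\,=\,\mathrm{Id}_{TX}$ for the reader's convenience.
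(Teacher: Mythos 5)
Your proof is correct and follows exactly the paper's route: the paper derives Corollary \ref{cor-2} precisely from the averaging construction $\widehat{\delta}=\frac{1}{\#\Gamma}\sum_{\gamma\in\Gamma}\delta_\gamma$ in the second half of the proof of Lemma \ref{lem1}, with the easy direction being immediate. Your explicit verification that $\varpi\circ\delta_\gamma=\mathrm{Id}_{TX}$ via the $\Gamma$--equivariance of $\varpi$ is exactly the point the paper leaves implicit.
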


\subsection{Split equivariant Lie algebroid connections}

Let $(V,\, \phi)$ be a split equivariant Lie algebroid (see Definition \ref{def1}).
As before, $G$ is a connected reductive affine algebraic group defined over $\mathbb C$.

\begin{proposition}\label{prop1}
An equivariant principal $G$--bundle $E_G$ over $X$ admits an equivariant holomorphic Lie algebroid 
connection (see Definition \ref{def2}) if and only if 
for every triple $(P,\, L(P),\, \chi)$ as in Lemma \ref{lem1}, and every $\Gamma$--equivariant holomorphic
reduction of structure group $E_{L(P)}$ of $E_G$ to $L(P)$,
\begin{equation}\label{e14}
{\rm degree}({\mathcal L}(E_{L(P)},\chi)) \ =\ 0.
\end{equation}
\end{proposition}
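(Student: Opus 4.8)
The plan is to reduce the statement to Lemma \ref{lem1} by exploiting the splitting $\rho \colon TX \longrightarrow V$ of $(V,\,\phi)$, which (by Lemma \ref{lem0}) we may and do take to be $\Gamma$--equivariant. The key observation is that the splitting $\rho$ produces a $\Gamma$--equivariant bundle isomorphism between $V$ and $TX \oplus \ker(\phi)$, and a corresponding decomposition of $\mathcal{A}(E_G)$ that relates Lie algebroid connections on $E_G$ to ordinary holomorphic connections. Concretely, I would first prove the implication that an equivariant holomorphic connection on $E_G$ produces an equivariant holomorphic Lie algebroid connection: given a $\Gamma$--equivariant holomorphic $\mu \colon TX \longrightarrow \mathrm{At}(E_G)$ with $\varpi\circ\mu = \mathrm{Id}_{TX}$, one checks that the map $v \longmapsto (v,\, \mu(\phi(v)))$ lands in $\mathcal{A}(E_G) = \ker(\psi)$ because $\phi(v) - \varpi(\mu(\phi(v))) = \phi(v) - \phi(v) = 0$, is $\Gamma$--equivariant since both $\mu$ and $\phi$ are, and satisfies $\rho\circ\delta = \mathrm{Id}_V$ by construction since $\rho$ is the projection to the $V$--factor. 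Combined with Lemma \ref{lem1}, this already gives that the vanishing of all the relevant degrees implies the existence of an equivariant holomorphic Lie algebroid connection.

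For the converse, I would go the other way: suppose $\delta \colon V \longrightarrow \mathcal{A}(E_G)$ is an equivariant holomorphic Lie algebroid connection, $\rho_{\mathcal A}\circ\delta = \mathrm{Id}_V$ (writing $\rho_{\mathcal A}$ for the map in \eqref{e9} to avoid clash with the splitting $\rho \colon TX \to V$). Composing with the $\Gamma$--equivariant splitting $\rho \colon TX \longrightarrow V$ and then with $\varphi \colon \mathcal{A}(E_G) \longrightarrow \mathrm{At}(E_G)$ of \eqref{e10}, set $\mu := \varphi \circ \delta \circ \rho \colon TX \longrightarrow \mathrm{At}(E_G)$. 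Using the commutative diagram \eqref{e11}, $\varpi \circ \varphi = \phi \circ \rho_{\mathcal A}$, so $\varpi\circ\mu = \phi\circ\rho_{\mathcal A}\circ\delta\circ\rho = \phi\circ\rho = \mathrm{Id}_{TX}$, where the last equality is the splitting condition on $(V,\,\phi)$. All maps involved are $\Gamma$--equivariant, so $\mu$ is an equivariant holomorphic connection on $E_G$. Now apply Lemma \ref{lem1} (the "only if" direction) to conclude that $\mathrm{degree}(\mathcal{L}(E_{L(P)},\chi)) = 0$ for every triple $(P,\, L(P),\, \chi)$ and every $\Gamma$--equivariant reduction $E_{L(P)}$.

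Putting the two directions together proves the equivalence. The genuinely substantive input is Lemma \ref{lem1}, which in turn rests on the Atiyah–Weil type criterion of \cite{AB} together with the averaging trick over $\Gamma$; everything in the present proof beyond that is diagram chasing. I do not anticipate a serious obstacle, but the point that requires a little care is verifying that the composite $\mu = \varphi\circ\delta\circ\rho$ genuinely satisfies $\varpi\circ\mu = \mathrm{Id}_{TX}$ rather than merely mapping into $\mathrm{At}(E_G)$ — this is exactly where the hypothesis that $(V,\,\phi)$ is \emph{split} is used, and it is the reason the proposition fails to have an unconditional analogue for nonsplit $(V,\,\phi)$. One should also note that the construction of $\mu$ depends on the choice of splitting $\rho$, but since the conclusion \eqref{e14} is a statement purely about $E_G$ and its reductions, this dependence is harmless.
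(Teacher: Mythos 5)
Your proposal is correct and follows essentially the same route as the paper: both reduce the statement to Lemma \ref{lem1} by showing that $E_G$ admits an equivariant holomorphic Lie algebroid connection if and only if it admits an equivariant holomorphic connection, using $\varphi\circ\delta\circ\eta$ for the converse and the splitting hypothesis exactly where you indicate. Your explicit formula $v\,\longmapsto\,(v,\,\mu(\phi(v)))$ in the forward direction produces the same map the paper obtains as the kernel of $\delta'_0\circ\varphi$, just written more directly.
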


\begin{proof}
We will show that $E_G$ admits an equivariant holomorphic Lie algebroid connection if and only if $E_G$ admits an
equivariant holomorphic connection. To prove this, first assume that $E_G$ admits an
equivariant holomorphic connection. Take an equivariant holomorphic connection
$$
\delta_0\ :\ TX\ \longrightarrow\ \text{At}(E_G)
$$
on $E_G$. Since $\varpi\circ\delta_0\,=\, {\rm Id}_{TX}$, where $\varpi$ is the homomorphism in
\eqref{e7}, there is a unique holomorphic homomorphism
$$
\delta'_0\ :\ \text{At}(E_G)\ \longrightarrow\ \text{ad}(E_G)
$$
such that $\text{kernel}(\delta'_0)\,=\, \delta_0(TX)$ and $\delta'_0\circ\iota\,=\,
{\rm Id}_{\text{ad}(E_G)}$, where $\iota$ is the homomorphism in \eqref{e7}. Now, consider the
homomorphism
$$
\delta'_0\circ\varphi\ :\ \mathcal{A}(E_{\mathbb G})\ \longrightarrow\ \text{ad}(E_G),
$$
where $\varphi$ is the homomorphism in \eqref{e10}. There is a unique holomorphic homomorphism
$$
\delta\ :\ V\ \longrightarrow\ \mathcal{A}(E_{\mathbb G})
$$
such that $\delta(V)\,=\,\text{kernel}(\delta'_0\circ\varphi)$ and $\rho\circ\delta
\,=\, {\rm Id}_V$, where $\rho$ is the homomorphism in \eqref{e9}. Since $\delta$ is also
$\Gamma$--equivariant, it defines an equivariant holomorphic Lie algebroid connection on $E_G$.

To prove the converse, assume that $E_G$ has an equivariant holomorphic Lie algebroid connection
$$
\delta\ :\ V\ \longrightarrow\ \mathcal{A}(E_{\mathbb G}).
$$
Fix a $\Gamma$--equivariant holomorphic homomorphism
$$
\eta\ :\ TX \ \longrightarrow\ V
$$
such that $\phi\circ\eta\,=\, {\rm Id}_{TX}$; see Definition \ref{def1}
(recall that $(V,\, \phi)$ is a split equivariant Lie algebroid). Now it is straightforward
to check that the composition of homomorphisms
$$
\varphi\circ\delta\circ\eta\ :\ TX \ \longrightarrow\ \text{At}(E_G),
$$
where $\varphi$ is the homomorphism in \eqref{e10}, is an equivariant holomorphic connection
on $E_G$.

Since $E_G$ admits an equivariant holomorphic Lie algebroid connection if and only if $E_G$ admits an
equivariant holomorphic connection, Lemma \ref{lem1} completes the proof of the proposition.
\end{proof}

Proposition \ref{prop1}, Corollary \ref{cor-2} and Lemma \ref{lem1}
together give the following:

\begin{corollary}\label{cor-1}
Let $(V,\, \phi)$ be a split equivariant Lie algebroid and $G$ a reductive affine algebraic group over
$\mathbb C$. Let $E_G$ be an equivariant principal $G$--bundle over $X$. The following four
statements are equivalent:
\begin{enumerate}
\item $E_G$ over $X$ admits an equivariant holomorphic Lie algebroid connection.

\item $E_G$ admits an equivariant holomorphic connection.

\item $E_G$ admits a holomorphic connection.

\item For every triple $(P,\, L(P),\, \chi)$ as in Lemma \ref{lem1}, and every
$\Gamma$--equivariant holomorphic reduction of structure group $E_{L(P)}$ of $E_G$ to $L(P)$,
$$
{\rm degree}({\mathcal L}(E_{L(P)},\chi)) \ =\ 0.
$$
\end{enumerate}
\end{corollary}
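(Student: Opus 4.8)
The plan is to observe that Corollary \ref{cor-1} is essentially a bookkeeping statement: it simply chains together three results already established in the excerpt, namely Proposition \ref{prop1}, Corollary \ref{cor-2}, and Lemma \ref{lem1}. So the proof is short and purely logical, and the only real content is verifying that the four statements line up correctly as a cycle of implications.

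First I would note that the equivalence of statements (1) and (4) is exactly the assertion of Proposition \ref{prop1}: that proposition says $E_G$ admits an equivariant holomorphic Lie algebroid connection if and only if $\mathrm{degree}(\mathcal{L}(E_{L(P)},\chi)) = 0$ for every triple $(P, L(P), \chi)$ and every $\Gamma$--equivariant reduction $E_{L(P)}$. Next, the equivalence of statements (2) and (4) is exactly Lemma \ref{lem1}, which gives the same degree criterion for the existence of an equivariant holomorphic connection. Finally, the equivalence of statements (2) and (3) is Corollary \ref{cor-2}, which says an equivariant principal $G$--bundle admits a holomorphic connection if and only if it admits an equivariant one. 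Stringing these together, $(1)\Leftrightarrow(4)\Leftrightarrow(2)\Leftrightarrow(3)$, establishes that all four statements are equivalent.

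An alternative, perhaps cleaner, route that avoids invoking the degree criterion twice: the proof of Proposition \ref{prop1} in fact shows directly that $E_G$ admits an equivariant holomorphic Lie algebroid connection if and only if it admits an equivariant holomorphic connection, i.e. $(1)\Leftrightarrow(2)$ directly; then $(2)\Leftrightarrow(3)$ is Corollary \ref{cor-2}, and $(2)\Leftrightarrow(4)$ is Lemma \ref{lem1}. I would probably present it this way, since it makes the logical structure most transparent and relies on the strongest form of each cited result.

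There is no genuine obstacle here — the statement is a corollary in the literal sense, and no new idea is needed. The only thing to be careful about is that the hypothesis on $(V,\phi)$ being split is precisely what is needed for Proposition \ref{prop1} to apply (it is used there to produce the equivariant splitting $\eta: TX \to V$ via Definition \ref{def1}), so I would state that hypothesis explicitly at the outset and then simply cite the three results in the appropriate order.

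\begin{proof}
The equivalences are obtained by combining results already established. By Proposition \ref{prop1} (whose proof in fact shows the stronger statement that $E_G$ admits an equivariant holomorphic Lie algebroid connection if and only if it admits an equivariant holomorphic connection, using that $(V,\, \phi)$ is split), statement (1) is equivalent to statement (2). By Corollary \ref{cor-2}, statement (2) is equivalent to statement (3). Finally, by Lemma \ref{lem1}, statement (2) is equivalent to statement (4). Hence all four statements are equivalent.
\end{proof}
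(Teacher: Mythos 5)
Your proof is correct and matches the paper exactly: the paper also obtains Corollary \ref{cor-1} by simply combining Proposition \ref{prop1}, Corollary \ref{cor-2} and Lemma \ref{lem1}. Your observation that the proof of Proposition \ref{prop1} directly yields $(1)\Leftrightarrow(2)$ is also how the paper's own argument is structured internally.
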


\section{Nonsplit equivariant Lie algebroid connections}\label{sec:Nonsplit Lie algebroid connections}

Let $(V,\, \phi)$ be a nonsplit equivariant Lie algebroid (see Definition \ref{def1}). As before, $G$
is a connected reductive affine algebraic group defined over $\mathbb C$. We will show that
any equivariant principal $G$--bundle over $X$ admits an equivariant holomorphic Lie algebroid connection.

\begin{remark}\label{rem1}
Take an equivariant principal $G$--bundle $E_G$ over $X$. There is a Levi subgroup $L(P)$ of
a parabolic subgroup $P\, \subset\, G$, and a holomorphic reduction of structure group
$E_{L(P)}\, \subset\, E_G$ of $E_G$ to $L(P)\,\subset\, G$, satisfying the following conditions:
\begin{enumerate}
\item The action of $\Gamma$ on $E_G$ preserves $E_{L(P)}\, \subset\, E_G$, and

\item the maximal torus of the group of all $\Gamma$--equivariant holomorphic automorphisms of $E_{L(P)}$
coincides with the center of $L(P)$. (Note that any element $z$ of the center of $L(P)$ gives
a $\Gamma$--equivariant holomorphic automorphisms of $E_{L(P)}$ defined by $x\, \longmapsto\, xz$.)
\end{enumerate}
Moreover if $P'\, \subset\, G$ is another parabolic subgroup, $L(P')$ is a Levi subgroup of $P'$, and
$E_{L(P')}\, \subset\, E_G$ is a holomorphic reduction of structure group of $E_G$ to $L(P')$ satisfying
the above two conditions, then there is an element $x\, \in\, G$ such that $L(P')\,=\, x^{-1}L(P)x$
and $E_{L(P')}\,=\, E_{L(P)}x$. (See \cite[p.~63, Theorem 4.1]{BP}.)
\end{remark}

\begin{lemma}\label{lem2}
Assume that the equivariant principal $L(P)$--bundle $E_{L(P)}$ in Remark \ref{rem1} admits an equivariant
holomorphic Lie algebroid
connection. Then the equivariant principal $G$--bundle $E_G$ admits an equivariant
holomorphic Lie algebroid connection.
\end{lemma}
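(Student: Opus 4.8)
The plan is to push forward an equivariant holomorphic Lie algebroid connection on $E_{L(P)}$ along the inclusion $L(P)\, \hookrightarrow\, G$ to obtain one on $E_G\,=\, E_{L(P)}\times^{L(P)} G$. First I would recall that the extension of structure group gives a canonical $\Gamma$--equivariant inclusion of Atiyah bundles $\text{At}(E_{L(P)})\, \hookrightarrow\, \text{At}(E_G)$ covering the identity on $TX$, induced by the differential of the natural map $E_{L(P)}\, \longrightarrow\, E_G$; on adjoint bundles it restricts to the inclusion $\text{ad}(E_{L(P)})\, \hookrightarrow\, \text{ad}(E_G)$ coming from the Lie algebra inclusion $\mathfrak{l}(P)\, \hookrightarrow\, \mathfrak{g}$. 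This inclusion is a morphism of Lie algebroids, so it is compatible with the $\mathbb{C}$--Lie algebra structures on both Atiyah bundles.

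Next I would transport this to the modified bundles $\mathcal{A}(-)$. By the construction in \eqref{e8}, applied with the same $(V,\,\phi)$ but the two different principal bundles, the fibered-product description gives a natural $\Gamma$--equivariant homomorphism $\mathcal{A}(E_{L(P)})\, \longrightarrow\, \mathcal{A}(E_G)$ sitting in a commutative ladder over the identity of $V$, compatible with the projections $\rho$ in \eqref{e9} and with the maps $\varphi$ to the respective Atiyah bundles; again this is a morphism of $\mathbb{C}$--Lie algebras. Then, given an equivariant holomorphic Lie algebroid connection $\delta_{L(P)}\, :\, V\, \longrightarrow\, \mathcal{A}(E_{L(P)})$ with $\rho\circ\delta_{L(P)}\,=\,{\rm Id}_V$, I would define $\delta$ to be the composition $V\, \xrightarrow{\,\delta_{L(P)}\,}\, \mathcal{A}(E_{L(P)})\, \longrightarrow\, \mathcal{A}(E_G)$. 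The commutativity of the ladder gives $\rho\circ\delta\,=\,{\rm Id}_V$, and $\Gamma$--equivariance of $\delta$ follows from that of $\delta_{L(P)}$ and of the extension map, since $\Gamma$ preserves $E_{L(P)}\, \subset\, E_G$ by condition (1) of Remark \ref{rem1}. Hence $\delta$ is an equivariant holomorphic Lie algebroid connection on $E_G$.

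The only real point requiring care is the identification of $\text{At}(E_G)$ (and hence $\mathcal{A}(E_G)$) near the reduction: one must check that the differential of $E_{L(P)}\, \longrightarrow\, E_G$ does descend to a well-defined bundle map on the $L(P)$-- (resp.\ $G$--)quotients and that it is injective with the stated behaviour on $\text{ad}$ and $TX$. This is the standard functoriality of the Atiyah sequence under extension of structure group, and it is $\Gamma$--equivariant because the $\Gamma$--actions on $E_{L(P)}$ and $E_G$ are compatible; so while it is the main thing to verify, it is not an obstacle so much as a routine check, and the rest of the argument is a diagram chase.
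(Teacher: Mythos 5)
Your proposal is correct and follows essentially the same route as the paper: both use the natural $\Gamma$--equivariant inclusions $\mathrm{ad}(E_{L(P)})\hookrightarrow \mathrm{ad}(E_G)$ and $\mathrm{At}(E_{L(P)})\hookrightarrow \mathrm{At}(E_G)$ coming from the reduction of structure group, lift them to a commutative ladder between the sequences for $\mathcal{A}(E_{L(P)})$ and $\mathcal{A}(E_G)$ over the identity on $V$, and define the connection on $E_G$ as the composition of the given splitting with the induced map $\mathcal{A}(E_{L(P)})\longrightarrow \mathcal{A}(E_G)$.
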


\begin{proof}
There are natural homomorphisms $a\, :\, \text{ad}(E_{L(P)})\, \hookrightarrow\, \text{ad}(E_G)$ and $b\,:\,
\text{At} (E_{L(P)})\, \hookrightarrow\, \text{At}(E_G)$ because $E_{L(P)}$ is a holomorphic
reduction of structure group of $E_G$ to $L(P)$, and they fit in the following commutative diagram:
$$
\begin{matrix}
0 &\longrightarrow & {\rm ad}(E_{L(P)}) & \longrightarrow & \text{At}(E_{L(P)}) &
\longrightarrow & TX & \longrightarrow & 0\\
&& \,\, \Big\downarrow a && \,\, \Big\downarrow b && \Big\Vert\\
0 & \longrightarrow & {\rm ad}(E_{G}) & \longrightarrow &
{\rm At}(E_G)& \longrightarrow & TX & \longrightarrow & 0
\end{matrix}
$$
where the rows are the Atiyah exact sequences (for $E_{L(P)}$ and $E_G$); see \eqref{e7}. This
commutative diagram produces the following commutative diagram:
\begin{equation}\label{e15}
\begin{matrix}
0 &\longrightarrow & {\rm ad}(E_{L(P)}) & \xrightarrow{\,\,\, \iota'\,\,\,} & \mathcal{A}(E_{L(P)}) &
\xrightarrow{\,\,\, \rho'\,\,\,} & V & \longrightarrow & 0\\
&& \,\, \Big\downarrow a' && \,\, \Big\downarrow b' && \Big\Vert\\
0 & \longrightarrow & {\rm ad}(E_{G}) & \xrightarrow{\,\,\, \iota\,\,\,} &
\mathcal{A}(E_G) & \longrightarrow & V & \longrightarrow & 0
\end{matrix}
\end{equation}
(see \eqref{e11}).

Since the principal $L(P)$--bundle $E_{L(P)}$ admits an equivariant
holomorphic Lie algebroid connection, we have a
$\Gamma$--equivariant holomorphic homomorphism $$\delta'\ :\ V\ \longrightarrow\ \mathcal{A}(E_{L(P)})$$
such that $\rho'\circ\delta'\,=\, \text{Id}_V$, where $\rho'$ is the homomorphism in \eqref{e15}. Now,
the homomorphism $$b'\circ\delta'\,:\, V\, \longrightarrow\, \mathcal{A}(E_G),$$ where $b'$
is the homomorphism in \eqref{e15}, is an equivariant holomorphic Lie algebroid connection on $E_G$.
\end{proof}

As before, $L(P)$ and $E_{L(P)}$ are as in Remark \ref{rem1}. Consider the Atiyah exact sequence
\begin{equation}\label{e16}
0 \ \longrightarrow\ {\rm ad}(E_{L(P)})\ \longrightarrow \ \text{At}(E_{L(P)})
\ \longrightarrow \ TX\ \longrightarrow\ 0
\end{equation}
for the equivariant principal ${L(P)}$--bundle $E_{L(P)}$. Let
$$
\beta\ \in\ H^1(X,\, \text{ad}(E_{L(P)})\otimes K_X)
$$
be the extension class for the short exact sequence in \eqref{e16}. Since \eqref{e16} is an exact
sequence of $\Gamma$--equivariant vector bundles, we have
\begin{equation}\label{e17}
\beta\ \in\ H^1(X,\, \text{ad}(E_{L(P)})\otimes K_X)^\Gamma \ 
\subset\ H^1(X,\, \text{ad}(E_{L(P)})\otimes K_X).
\end{equation}
Consider the dual homomorphism $\phi^*\, :\, K_X\, \longrightarrow\, V^*$ for the anchor map.
Tensoring it with the identity map of $\text{ad}(E_{L(P)})$, we have the homomorphism
$$
\Psi\ :=\ \text{Id}_{\text{ad}(E_{L(P)})}\otimes\phi^*\ :\ \text{ad}(E_{L(P)})\otimes K_X\
\longrightarrow\ \text{ad}(E_{L(P)})\otimes V^*.
$$
Let
\begin{equation}\label{e18}
\Psi_*\ :\ H^1(X,\, \text{ad}(E_{L(P)})\otimes K_X)\ \longrightarrow\
H^1(X,\, \text{ad}(E_{L(P)})\otimes V^*)
\end{equation}
be the homomorphism of cohomologies induced by the above homomorphism $\Psi$.

\begin{lemma}\label{lem3}
The equivariant principal $L(P)$--bundle $E_{L(P)}$ admits an equivariant holomorphic
Lie algebroid connection if and only if 
$$
\Psi_* (\beta)\ =\ 0,
$$
where $\beta$ and $\Psi_*$ are constructed in \eqref{e17} and \eqref{e18} respectively.
\end{lemma}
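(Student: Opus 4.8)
The plan is to translate the existence of an equivariant holomorphic Lie algebroid connection on $E_{L(P)}$ into a cohomological splitting condition, exactly parallel to how a holomorphic connection on $E_{L(P)}$ corresponds to the vanishing of the Atiyah class $\beta$. First I would write down the $\Gamma$--equivariant short exact sequence that is the relevant row of \eqref{e11} applied to $E_{L(P)}$, namely
$$
0\ \longrightarrow\ \mathrm{ad}(E_{L(P)})\ \xrightarrow{\,\iota'\,}\ \mathcal{A}(E_{L(P)})\ \xrightarrow{\,\rho'\,}\ V\ \longrightarrow\ 0,
$$
and observe that an equivariant holomorphic Lie algebroid connection $\delta'\,:\,V\,\longrightarrow\,\mathcal{A}(E_{L(P)})$ with $\rho'\circ\delta'\,=\,\mathrm{Id}_V$ is precisely a $\Gamma$--equivariant holomorphic splitting of this sequence. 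Hence such a $\delta'$ exists if and only if the extension class of this sequence in $H^1(X,\,\mathrm{ad}(E_{L(P)})\otimes V^*)^\Gamma$ vanishes. (The passage to the invariant part of $H^1$ is legitimate because $\#\Gamma$ is invertible, so a splitting over $X$ can be averaged into a $\Gamma$--equivariant one; this is the same averaging trick used in Lemma \ref{lem0} and Lemma \ref{lem1}, and I would just cite it.)

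The second step is to identify that extension class with $\Psi_*(\beta)$. For this I would use the commutative diagram \eqref{e11} for $E_{L(P)}$: the right-hand square
$$
\begin{matrix}
\mathcal{A}(E_{L(P)}) & \xrightarrow{\,\rho'\,} & V\\
\,\,\Big\downarrow\varphi' && \,\,\Big\downarrow\phi\\
\mathrm{At}(E_{L(P)}) & \xrightarrow{\,\varpi\,} & TX
\end{matrix}
$$
exhibits the top row as the pullback of the Atiyah sequence \eqref{e16} along the anchor map $\phi\,:\,V\,\longrightarrow\,TX$. By the standard functoriality of extension classes under pullback, the class of the top row is $\phi^*$ applied to the class $\beta$ of the bottom row; after the usual identification $\mathrm{Ext}^1(TX,\,\mathrm{ad}(E_{L(P)}))\,=\,H^1(X,\,\mathrm{ad}(E_{L(P)})\otimes K_X)$ and $\mathrm{Ext}^1(V,\,\mathrm{ad}(E_{L(P)}))\,=\,H^1(X,\,\mathrm{ad}(E_{L(P)})\otimes V^*)$, the map induced on $H^1$ by pullback along $\phi$ is exactly the map $\Psi_*$ induced by $\Psi\,=\,\mathrm{Id}_{\mathrm{ad}(E_{L(P)})}\otimes\phi^*$ defined in \eqref{e18}. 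Combining the two steps: $\delta'$ exists $\iff$ the top-row class vanishes $\iff$ $\Psi_*(\beta)\,=\,0$, which is the assertion. I would also note that $\beta$ is $\Gamma$--invariant by \eqref{e17} and that $\Psi_*$ is $\Gamma$--equivariant, so the whole argument is consistent at the level of invariants.

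Concretely, to verify that the top row of \eqref{e11} for $E_{L(P)}$ really is the Baer pullback of \eqref{e16} along $\phi$, I would go back to the construction in \eqref{e8}: $\mathcal{A}(E_{L(P)})$ was defined as the kernel of $(v,w)\,\longmapsto\,\phi(v)-\varpi(w)$ inside $V\oplus\mathrm{At}(E_{L(P)})$, which is literally the fibre product $V\times_{TX}\mathrm{At}(E_{L(P)})$; that fibre product, together with its projections $\rho$ and $\varphi$, is the textbook model of the pullback extension, and $\ker(\rho)\,=\,\ker(\varpi)\,=\,\mathrm{ad}(E_{L(P)})$ as already recorded after \eqref{e9}. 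So this identification is essentially formal once spelled out. I expect the only genuinely delicate point to be bookkeeping around the equivariant structures — making sure the pullback-of-extensions isomorphism is $\Gamma$--equivariant and that averaging does not disturb the splitting condition $\rho'\circ\delta'\,=\,\mathrm{Id}_V$ — but this is routine given the averaging arguments already established in the paper, and the heart of the proof is the purely homological pullback computation.
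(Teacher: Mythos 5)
Your proposal is correct and follows essentially the same route as the paper: the paper likewise identifies an equivariant Lie algebroid connection with a splitting of the sequence $0\to\mathrm{ad}(E_{L(P)})\to\mathcal{A}(E_{L(P)})\to V\to 0$, handles equivariance by the averaging argument of Corollary \ref{cor-2}, and deduces $\Psi_*(\beta)=\beta_V$ from the commutative diagram \eqref{e21} (you merely spell out the pullback-of-extensions functoriality that the paper treats as immediate).
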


\begin{proof}
Consider the short exact sequence
\begin{equation}\label{e19}
0 \ \longrightarrow\ {\rm ad}(E_{L(P)})\ \xrightarrow{\,\,\, \iota'\,\,\,}\ \mathcal{A}(E_{L(P)})
\ \xrightarrow{\,\,\, \rho'\,\,\,}\ V\ \longrightarrow\ 0
\end{equation}
in \eqref{e15}. Let
$$
\beta_V\ \in\ H^1(X,\, \text{ad}(E_{L(P)})\otimes V^*)
$$
be the extension class for it. Since \eqref{e19} is an exact sequence of $\Gamma$--equivariant vector
bundles, we have
\begin{equation}\label{e20}
\beta_V\ \in\ H^1(X,\, \text{ad}(E_{L(P)})\otimes V^*)^\Gamma \ 
\subset\ H^1(X,\, \text{ad}(E_{L(P)})\otimes V^*).
\end{equation}
Note that the equivariant principal $L(P)$--bundle $E_{L(P)}$ admits an equivariant holomorphic Lie algebroid
connection if and only if we have $\beta_V\,=\, 0$; indeed, $\beta_V\,=\, 0$ if and only if
$E_{L(P)}$ admits a holomorphic Lie algebroid connection, and, exactly as shown in Corollary
\ref{cor-2}, $E_{L(P)}$ admits a holomorphic Lie algebroid connection if and only if if admits an
equivariant holomorphic Lie algebroid connection.

Now consider the commutative diagram
\begin{equation}\label{e21}
\begin{matrix}
0 &\longrightarrow & {\rm ad}(E_{L(P)}) & \xrightarrow{\,\,\,\iota'\,\,\,} & {\mathcal A}(E_{L(P)}) &
\xrightarrow{\,\,\,\rho'\,\,\,} & V & \longrightarrow & 0\\
&& \Big\Vert && \Big\downarrow && \,\, \Big\downarrow\phi \\
0 & \longrightarrow & {\rm ad}(E_{L(P)}) & \longrightarrow &
{\rm At}(E_{L(P)})& \longrightarrow & TX & \longrightarrow & 0
\end{matrix}
\end{equation}
(see \eqref{e11}). From \eqref{e21} it follows immediately that
\begin{equation}\label{e22}
\Psi_* (\beta)\ =\ \beta_V,
\end{equation}
where $\beta$ and $\beta_V$ are the extension classes in \eqref{e17} and \eqref{e20} respectively
while $\Psi_*$ is the homomorphism in \eqref{e18}. Since $E_{L(P)}$ admits an equivariant holomorphic
Lie algebroid connection if and only if we have $\beta_V\,=\, 0$, it follows from \eqref{e22} that
$E_{L(P)}$ admits an equivariant holomorphic Lie algebroid connection if and only if $\Psi_* (\beta)\,=\, 0$.
\end{proof}

As before, $L(P)$ and $E_{L(P)}$ are as in Remark \ref{rem1}.
Denote the Lie algebra of $L(P)$ by $\ell({\mathfrak p})$. Since $L(P)$ is reductive, there is
an $L(P)$--invariant nondegenerate symmetric bilinear form on $\ell({\mathfrak p})$. Fix a 
$L(P)$--invariant nondegenerate symmetric bilinear form
\begin{equation}\label{e23}
{\mathcal B}\ \in\ \text{Sym}^2(\ell({\mathfrak p})^*)^{L(P)}.
\end{equation}
The form $\mathcal B$ in \eqref{e23} produces a holomorphic isomorphism
\begin{equation}\label{e24}
\text{ad}(E_{L(P)}) \ \stackrel{\sim}{\longrightarrow}\ \text{ad}(E_{L(P)})^*.
\end{equation}

By Serre duality,
\begin{equation}\label{e25}
H^1(X,\, \text{ad}(E_{L(P)})\otimes K_X)^\Gamma \ = \ (H^0(X,\, \text{ad}(E_{L(P)})^*)^*)^\Gamma
\end{equation}
$$
 =\ (H^0(X,\, \text{ad}(E_{L(P)}))^*)^\Gamma \ =\ (H^0(X,\, \text{ad}(E_{L(P)}))^\Gamma)^* ;
$$
see \eqref{e24}.

Let
\begin{equation}\label{e26}
\widehat{\beta}\ \in\ (H^0(X,\, \text{ad}(E_{L(P)}))^\Gamma)^*\ =\ 
\text{Hom}(H^0(X,\, \text{ad}(E_{L(P)}))^\Gamma,\, {\mathbb C})
\end{equation}
be the element corresponding to $\beta$ in \eqref{e17} for the isomorphism in \eqref{e25}.

Let
\begin{equation}\label{e27}
{\mathcal Z}(\ell({\mathfrak p}))\ \subset \ \ell({\mathfrak p})
\end{equation}
be the center of $\ell({\mathfrak p})$. Since the adjoint action of $L(P)$ on its Lie algebra
$\ell({\mathfrak p})$ fixes ${\mathcal Z}(\ell({\mathfrak p}))$ pointwise, we have an injective homomorphism
\begin{equation}\label{e28}
\Phi\ :\ {\mathcal Z}(\ell({\mathfrak p}))\ \longrightarrow\ H^0(X,\, \text{ad}(E_{L(P)}))^\Gamma .
\end{equation}

\begin{proposition}\label{prop2}
\mbox{}
\begin{enumerate}
\item Take any $\xi_n\, \in\, H^0(X,\, {\rm ad}(E_{L(P)}))^\Gamma$ which is nilpotent over some point
of $X$. Then $$\widehat{\beta}(\xi_n)\ =\ 0,$$ where $\widehat{\beta}$ is the homomorphism in \eqref{e26}.

\item Take any $\xi_s\, \in\, H^0(X,\, {\rm ad}(E_{L(P)}))^\Gamma$ which is semisimple over every point
of $X$. Then there is an element $w\, \in\, {\mathcal Z}(\ell({\mathfrak p}))$ such that
$$
\Phi(w)\ =\ \xi_s,
$$
where $\Phi$ is the homomorphism in \eqref{e28}.
\end{enumerate}
\end{proposition}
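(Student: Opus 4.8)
\emph{Plan.} I would prove the two parts separately, and dispose of (2) first since it is shorter. Suppose $\xi_s$ is semisimple at every point of $X$. Then the characteristic polynomial of the bundle endomorphism $\text{ad}(\xi_s)\colon \text{ad}(E_{L(P)})\to\text{ad}(E_{L(P)})$ has holomorphic, hence (by compactness and connectedness of $X$) constant, coefficients; so its eigenvalues and their multiplicities are constant, the eigensheaves are sub-bundles, and $\text{ad}(E_{L(P)})$ is their direct sum, whence $\text{ad}(\xi_s)$ is a semisimple endomorphism. It follows that $\xi_s$ is a semisimple element of the linear algebraic group $A:=\text{Aut}^\Gamma(E_{L(P)})$, whose Lie algebra is $H^0(X,\text{ad}(E_{L(P)}))^\Gamma$: semisimplicity may be tested in a faithful representation of $A$, for instance the action of $A$ on $H^0\big(X,(E_{L(P)}\times^{L(P)}W)\otimes\mathcal{O}_X(D)\big)^\Gamma$ for a faithful $L(P)$--module $W$ and $D\gg 0$, on which $\xi_s$ acts by tensoring with the diagonalizable endomorphism produced above. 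A semisimple element of $\text{Lie}(A)$ lies in the Lie algebra of a maximal torus of $A$; by the second condition in Remark \ref{rem1} this maximal torus is the identity component of $Z(L(P))$ embedded in $A$ by right translations, and being central in $A$ it is the unique maximal torus. Hence $\xi_s\in\text{Lie}(Z(L(P))^0)=\mathcal{Z}(\ell(\mathfrak p))$. Since the embedding $Z(L(P))^0\hookrightarrow A$ has differential exactly the homomorphism $\Phi$ of \eqref{e28} --- a central element of $L(P)$ corresponds to the constant section of $\text{ad}(E_{L(P)})$ --- we obtain $\xi_s=\Phi(w)$ for some $w\in\mathcal{Z}(\ell(\mathfrak p))$, which is the assertion of (2).

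For (1): first, if $\xi_n$ is nilpotent at one point it is nilpotent at every point, since the coefficients of the characteristic polynomial of $\xi_n$ (computed in a faithful representation of $L(P)$) are $L(P)$--invariant polynomials, hence give constant functions on $X$, hence vanish identically. Next I would reduce to a statement about vector bundles. Fix a faithful representation $\rho\colon L(P)\hookrightarrow\text{GL}(W)$, set $E_W:=E_{L(P)}\times^{L(P)}W$ and $\bar\xi_n:=\rho_*\xi_n\in H^0(X,\text{End}(E_W))^\Gamma$, which is nilpotent at every point. It is harmless to take $\mathcal B$ to be the invariant form $(a,b)\mapsto \text{tr}_W(\rho_*a\cdot\rho_*b)$, because replacing $\mathcal B$ by another nondegenerate invariant form amounts to precomposing $\widehat\beta$ with an $L(P)$--equivariant, $\mathcal B$--self-adjoint bundle automorphism of $\text{ad}(E_{L(P)})$ that carries nilpotent sections to nilpotent sections. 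By \eqref{e25}--\eqref{e26} and Serre duality, $\widehat\beta(\xi_n)$ is the image of the Atiyah class $\beta$ of \eqref{e16} under the map $H^1(X,\text{ad}(E_{L(P)})\otimes K_X)\to H^1(X,K_X)\cong\mathbb C$ induced by $\mathcal B(\xi_n,-)\colon\text{ad}(E_{L(P)})\to\mathcal O_X$. Under the inclusion $\text{ad}(E_{L(P)})\hookrightarrow\text{End}(E_W)$ the class $\beta$ maps to the Atiyah class of $E_W$ and $\mathcal B(\xi_n,-)$ to the restriction of $\Theta\mapsto\text{tr}(\bar\xi_n\Theta)$; hence $\widehat\beta(\xi_n)$ equals the extension class in $H^1(X,K_X)$ of the push-out of the Atiyah exact sequence $0\to\text{End}(E_W)\to\text{At}(E_W)\to TX\to 0$ along $\text{tr}(\bar\xi_n\,\cdot\,)\colon\text{End}(E_W)\to\mathcal O_X$.

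To see that this push-out is trivial, let $F_i\subseteq E_W$ be the sub-bundle saturating $\ker(\bar\xi_n^{\,i})$; the inclusions $\bar\xi_n(F_i)\subseteq F_{i-1}$ hold on a dense open set, hence everywhere, and $0=F_0\subset F_1\subset\cdots\subset F_m=E_W$ is a $\Gamma$--invariant flag (as $\bar\xi_n$ is $\Gamma$--invariant). Equivalently $E_W$ reduces to the parabolic $P_W\subset\text{GL}(W)$ stabilizing this flag type, and $\bar\xi_n$ is a holomorphic section of the nilradical sub-bundle $\mathfrak n(E_{P_W})\subset\text{ad}(E_{P_W})$. For any local section $\Theta$ of $\text{ad}(E_{P_W})$ (that is, $\Theta$ preserving the flag) one has $\bar\xi_n\Theta(F_i)\subseteq F_{i-1}$, so $\text{tr}(\bar\xi_n\Theta)=0$; thus $\text{tr}(\bar\xi_n\,\cdot\,)$ annihilates $\text{ad}(E_{P_W})$. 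Consequently the composite $\text{At}(E_{P_W})\to\text{At}(E_W)\to Q$ (where $Q$ denotes the push-out) vanishes on $\text{ad}(E_{P_W})$, hence factors through $\text{At}(E_{P_W})/\text{ad}(E_{P_W})=TX$, and the resulting homomorphism $TX\to Q$ splits the push-out sequence $0\to\mathcal O_X\to Q\to TX\to 0$, as one verifies by composing with $Q\to TX$ and invoking the Atiyah projection for $E_{P_W}$. Therefore $\widehat\beta(\xi_n)=0$.

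I expect the main work to be the bookkeeping in (1): making precise, through \eqref{e25}--\eqref{e26} and Serre duality, the identification of $\widehat\beta(\xi_n)$ with the push-out extension class; checking the naturality of the Atiyah class and of the pairing under $\text{ad}(E_{L(P)})\hookrightarrow\text{End}(E_W)$; and verifying that the induced $TX\to Q$ is a genuine splitting. None of this is deep, but each step requires care with the various identifications. Part (2), by contrast, reduces cleanly to the structural input of Remark \ref{rem1}(2) once $\xi_s$ is recognized as a semisimple element of $\text{Aut}^\Gamma(E_{L(P)})$.
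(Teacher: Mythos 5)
Your argument is correct, and it diverges from the paper only in part (1). For part (2) you follow exactly the paper's route: Remark \ref{rem1}(2) identifies the (unique, central) maximal torus of $\mathrm{Aut}^\Gamma(E_{L(P)})$ with the image of $Z(L(P))^0$, so any semisimple element of its Lie algebra $H^0(X,\mathrm{ad}(E_{L(P)}))^\Gamma$ lies in $\Phi(\mathcal{Z}(\ell(\mathfrak p)))$; you add the worthwhile verification that pointwise semisimplicity of $\xi_s$ makes it a semisimple element of that Lie algebra, a step the paper leaves implicit. For part (1) the paper's entire proof is a citation of \cite[Proposition~3.9]{AB}, whereas you supply a self-contained argument: identify $\widehat\beta(\xi_n)$ with the class of the push-out of the Atiyah sequence of $E_W$ along $\mathrm{tr}(\bar\xi_n\,\cdot)$, reduce $E_W$ to the parabolic $P_W$ preserving the saturated kernel flag of $\bar\xi_n$, and observe that $\mathrm{tr}(\bar\xi_n\,\cdot)$ annihilates $\mathrm{ad}(E_{P_W})$, so the reduction's Atiyah sequence induces a splitting of the push-out. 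This is essentially the content of the cited result, so what you gain is self-containedness rather than a new idea. Two small points deserve an explicit line in a written-up version: the reduction to the trace form uses that any two nondegenerate invariant forms differ by an $L(P)$-equivariant automorphism of $\ell(\mathfrak p)$, which necessarily preserves the centre, scales each simple ideal, and hence preserves nilpotency; and in (2) the faithfulness of the $A$-action on $\Gamma$-invariant sections should be checked at the fixed points of $\Gamma$ (it holds because invariant sections generate the fibres over a dense open subset, which suffices).
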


\begin{proof}
The first statement follows immediately from \cite[p.~341, Proposition 3.9]{AB}.

For the proof of second statement, first recall from Remark \ref{rem1} that the center of $L(P)$ is the
maximal torus of the group of all $\Gamma$--equivariant holomorphic automorphisms of $E_{L(P)}$.
The automorphisms of $E_{L(P)}$ given by the center of $L(P)$ evidently commute with all the automorphisms
of $E_{L(P)}$. On the other hand, the image of the connected component, containing the
identity element, of the center of $L(P)$ under the natural map to the 
group of all $\Gamma$--equivariant holomorphic automorphisms of $E_{L(P)}$, is a maximal torus
of the group of all $\Gamma$--equivariant holomorphic automorphisms of $E_{L(P)}$. Therefore, we
conclude that a maximal torus of the 
group of all $\Gamma$--equivariant holomorphic automorphisms of $E_{L(P)}$
is contained in the center of the group of all $\Gamma$--equivariant
holomorphic automorphisms of $E_{L(P)}$.

If the maximal torus of a connected complex reductive algebraic group $\mathcal G$
is contained in the center of $\mathcal G$, then $\mathcal G$ is abelian, which
means that $\mathcal G$ is a torus. Therefore, the semisimple
part, i.e., the Levi factor, of the Lie algebra of the group of all $\Gamma$--equivariant holomorphic
automorphisms of $E_{L(P)}$ coincides with the center ${\mathcal Z}(\ell({\mathfrak p}))$. Note that
$H^0(X,\, \text{ad}(E_{L(P)}))^\Gamma$ is the Lie algebra of the group of all $\Gamma$--equivariant
holomorphic automorphisms of $E_{L(P)}$. From these, the second statement of the proposition
follows immediately.
\end{proof}

\begin{proposition}\label{prop3}
The equivariant principal $L(P)$--bundle $E_{L(P)}$ admits an equivariant holomorphic
Lie algebroid connection.
\end{proposition}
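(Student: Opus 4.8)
The plan is to invoke Lemma~\ref{lem3}: it is enough to prove $\Psi_*(\beta)=0$, which by \eqref{e22} equals the extension class $\beta_V$ of the sequence \eqref{e19}. First I would recast this through $\widehat\beta$. By Serre duality together with the isomorphism \eqref{e24} coming from the form $\mathcal B$, the space $H^1(X,\, \mathrm{ad}(E_{L(P)})\otimes V^*)^\Gamma$ is dual to $H^0(X,\, \mathrm{ad}(E_{L(P)})\otimes V\otimes K_X)^\Gamma$, and under this duality the homomorphism $\Psi_*$ of \eqref{e18} is dual to the homomorphism
$$
\widehat\phi_*\ :\ H^0(X,\, \mathrm{ad}(E_{L(P)})\otimes V\otimes K_X)^\Gamma\ \longrightarrow\ H^0(X,\, \mathrm{ad}(E_{L(P)}))^\Gamma
$$
induced by the bundle map $a\otimes v\otimes\omega\,\longmapsto\,\langle\phi(v),\,\omega\rangle\cdot a$, where $\langle\phi(v),\,\omega\rangle\,\in\,\mathcal O_X$ uses the pairing $TX\otimes K_X\,=\,\mathcal O_X$; indeed both $\Psi_*$ and $\widehat\phi_*$ are built from the anchor $\phi$. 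Since $\Psi_*(\beta)$ is $\Gamma$--invariant, averaging over $\Gamma$ shows that $\Psi_*(\beta)=0$ if and only if it pairs to zero with every $\Gamma$--invariant element, so by the definition \eqref{e26} of $\widehat\beta$ the proposition is equivalent to the assertion that $\widehat\beta\big(\widehat\phi_*(\sigma)\big)=0$ for all $\sigma\,\in\, H^0(X,\, \mathrm{ad}(E_{L(P)})\otimes V\otimes K_X)^\Gamma$.

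I would fix such a $\sigma$ and set $\xi:=\widehat\phi_*(\sigma)\,\in\, H^0(X,\, \mathrm{ad}(E_{L(P)}))^\Gamma$. Using the $L(P)$--invariant, $\Gamma$--equivariant splitting $\ell(\mathfrak p)=\mathcal Z(\ell(\mathfrak p))\oplus[\ell(\mathfrak p),\,\ell(\mathfrak p)]$, one may write $\mathrm{ad}(E_{L(P)})=\big(\mathcal O_X\otimes\mathcal Z(\ell(\mathfrak p))\big)\oplus\mathrm{ad}'(E_{L(P)})$, the first summand being the trivial bundle because $L(P)$ fixes its center pointwise; let $\overline\xi\,\in\,\mathcal Z(\ell(\mathfrak p))$ be the constant $\Gamma$--invariant component of $\xi$ in that summand. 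The crucial step is to prove $\overline\xi=0$, and this is where the hypothesis that $(V,\,\phi)$ is nonsplit is used, via Lemma~\ref{lem0}. If the anchor $\phi\,:\,V\to TX$ is not surjective as a bundle map (in particular if $\phi=0$), its image is $TX(-D)$ for a nonzero effective divisor $D$, so $\widehat\phi$ factors through $\mathrm{ad}(E_{L(P)})(-D)$ and $\xi$ vanishes along $D$; a constant central section vanishing somewhere is $0$, so $\overline\xi=0$. If $\phi$ is surjective, set $K:=\ker\phi$ and let $\epsilon\,\in\, H^1(X,\, K\otimes K_X)^\Gamma$ be the extension class of $0\to K\to V\xrightarrow{\ \phi\ }TX\to0$; since $(V,\,\phi)$ is nonsplit, Lemma~\ref{lem0} shows this sequence has no $\mathcal O_X$--linear splitting, hence $\epsilon\neq0$. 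Twisting the resulting sequence $0\to K\otimes K_X\to V\otimes K_X\to\mathcal O_X\to0$ by $\mathrm{ad}(E_{L(P)})$ and passing to $\Gamma$--invariant cohomology identifies the image of $\widehat\phi_*$ with the kernel of cup product with $\epsilon$, so $\xi\cup\epsilon=0$; projecting onto the trivial central summand yields $\overline\xi\otimes\epsilon=0$ in $\mathcal Z(\ell(\mathfrak p))\otimes H^1(X,\, K\otimes K_X)$, and since $\epsilon\neq0$ this forces $\overline\xi=0$.

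Hence $\xi$ is a $\Gamma$--invariant section of $\mathrm{ad}'(E_{L(P)})$, and I would conclude using Proposition~\ref{prop2}. Taking the Jordan decomposition $\xi=\xi_s+\xi_n$ in the algebraic Lie algebra $H^0(X,\, \mathrm{ad}(E_{L(P)}))^\Gamma$ of $\Gamma$--equivariant infinitesimal automorphisms of $E_{L(P)}$, the structure of this Lie algebra exhibited in the proof of Proposition~\ref{prop2} --- its maximal reductive subalgebra is exactly the central subalgebra $\Phi(\mathcal Z(\ell(\mathfrak p)))$ --- gives $\xi_s=\Phi(w)$ for some $w\,\in\,\mathcal Z(\ell(\mathfrak p))$, while $\xi_n$ lies in the nilradical and so is nilpotent over every point of $X$. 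A fiberwise nilpotent section of $\mathrm{ad}(E_{L(P)})$ has vanishing central component (a nilpotent element of a reductive Lie algebra lies in its derived subalgebra), so $\xi_n$, and therefore $\xi_s=\xi-\xi_n$, is again a section of $\mathrm{ad}'(E_{L(P)})$; as the central summand meets $\mathrm{ad}'(E_{L(P)})$ only in $0$, this forces $\xi_s=0$. Thus $\xi=\xi_n$ is nilpotent over a point of $X$, and Proposition~\ref{prop2}(1) gives $\widehat\beta(\xi)=0$. This proves $\Psi_*(\beta)=0$, so by Lemma~\ref{lem3} the bundle $E_{L(P)}$ admits an equivariant holomorphic Lie algebroid connection. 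I expect the main obstacle to be extracting the vanishing $\overline\xi=0$ from the nonsplitness of $(V,\,\phi)$ (equivalently, from the absence of an $\mathcal O_X$--linear splitting, via Lemma~\ref{lem0}) in the case of a surjective anchor, i.e.\ the step using $\epsilon\neq0$; the reduction via Serre duality in the first paragraph and the bookkeeping with the automorphism group in the last are routine given Remark~\ref{rem1} and Proposition~\ref{prop2}.
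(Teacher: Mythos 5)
Your proposal is correct, and it reaches the paper's own reduction --- via Lemma~\ref{lem3}, the identity \eqref{e22}, Serre duality, and the map $\widetilde{\Phi}$ of \eqref{e34b} (your $\widehat{\phi}_*$), the whole point is to show that $\widehat{\beta}$ annihilates the image of $\widetilde{\Phi}$ --- but the way you exploit nonsplitness is genuinely different. The paper argues by contradiction: if $\widehat{\beta}(\widetilde{\Phi}(s))\,\neq\,0$, then by Proposition~\ref{prop2} the section $\widehat{s}\,=\,\widetilde{\Phi}(s)$ has a nonzero central semisimple part, some character $\chi$ of $L(P)$ pairs nontrivially with it, and the rescaled section $\widetilde{\chi}'(s)/\widetilde{\chi}_*(\widehat{s})\,\in\, H^0(X,\,\mathrm{Hom}(TX,\,V))$ is an explicit splitting of the anchor. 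You argue directly: using the $\Gamma$-- and $L(P)$--equivariant decomposition $\mathrm{ad}(E_{L(P)})\,=\,(\mathcal{O}_X\otimes\mathcal{Z}(\ell(\mathfrak{p})))\oplus\mathrm{ad}'(E_{L(P)})$, you kill the central component of $\xi\,=\,\widetilde{\Phi}(\sigma)$ by cupping with the extension class $\epsilon$ of $0\,\to\,\ker\phi\,\to\, V\,\to\, TX\,\to\,0$ (nonzero by Lemma~\ref{lem0} and nonsplitness), and then use the Jordan decomposition in $H^0(X,\,\mathrm{ad}(E_{L(P)}))^\Gamma$ together with the structure established in the proof of Proposition~\ref{prop2}(2) to conclude that $\xi$ is fiberwise nilpotent, so Proposition~\ref{prop2}(1) finishes. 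The two mechanisms are two readings of the same long exact sequence: the paper's splitting exists precisely because the constant $1$ lifts along $H^0(X,\,V\otimes K_X)\,\to\, H^0(X,\,\mathcal{O}_X)$, i.e.\ because $1\cup\epsilon\,=\,0$, which is exactly the vanishing you rule out at the outset; your version is direct rather than by contradiction and yields the slightly stronger intermediate fact that every section in the image of $\widetilde{\Phi}$ is nilpotent over every point. Two details should be written out but are not gaps: in the non-surjective-anchor case $\xi$ vanishes along a divisor, where Proposition~\ref{prop2}(1) already applies since $0$ is nilpotent; and the abstract Jordan decomposition in the algebraic Lie algebra $H^0(X,\,\mathrm{ad}(E_{L(P)}))^\Gamma$ is computed fiberwise because evaluation at a point of $E_{L(P)}$ defines a homomorphism of algebraic groups from $\mathrm{Aut}^\Gamma(E_{L(P)})$ to $L(P)$, so the nilradical consists of fiberwise nilpotent sections.
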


\begin{proof}
In view of \eqref{e22} and Lemma \ref{lem3}, it suffices to show that
\begin{equation}\label{e29}
\beta_V\ =\ \Psi_* (\beta)\ =\ 0,
\end{equation}
where $\beta$ and $\Psi_*$ are constructed in \eqref{e17} and \eqref{e18} respectively.

By Serre duality,
\begin{equation}\label{e30}
H^1(X,\, \text{ad}(E_{L(P)})\otimes V^*)^\Gamma \ = \ (H^0(X,\, \text{ad}(E_{L(P)})^*\otimes
V\otimes K_X)^*)^\Gamma
\end{equation}
$$
 =\ (H^0(X,\, \text{ad}(E_{L(P)})\otimes V\otimes K_X)^*)^\Gamma \ =\
(H^0(X,\, \text{ad}(E_{L(P)})\otimes V\otimes K_X)^\Gamma)^* ;
$$
see \eqref{e24}. Let
\begin{equation}\label{e31}
\widehat{\beta}_V\ \in\ (H^0(X,\, \text{ad}(E_{L(P)})\otimes V\otimes K_X)^\Gamma)^*\ =\ 
\text{Hom}(H^0(X,\, \text{ad}(E_{L(P)})\otimes V\otimes K_X)^\Gamma,\, {\mathbb C})
\end{equation}
be the element corresponding to $\beta_V$ in \eqref{e20} for the isomorphism in \eqref{e30}.

Consider the anchor map $\phi\, \in\, H^0(X,\, V^*\otimes TX)^\Gamma$. We have the homomorphism
\begin{equation}\label{e32}
\Phi_1\ :\ H^0(X,\, \text{ad}(E_{L(P)})\otimes V\otimes K_X)^\Gamma\ \longrightarrow\
H^0(X,\, \text{ad}(E_{L(P)})\otimes V\otimes K_X)^\Gamma\otimes H^0(X,\, V^*\otimes TX)^\Gamma
\end{equation}
that sends any $s\, \in\, H^0(X,\, \text{ad}(E_{L(P)})\otimes V\otimes K_X)^\Gamma$ to
$s\otimes\phi$. There is a natural homomorphism
\begin{equation}\label{e33}
\Phi_2\ :\ H^0(X,\, \text{ad}(E_{L(P)})\otimes V\otimes K_X)^\Gamma
\otimes H^0(X,\, V^*\otimes TX)^\Gamma\ \longrightarrow\
\end{equation}
$$
H^0(X,\, \text{ad}(E_{L(P)})\otimes V\otimes K_X\otimes V^*\otimes TX)^\Gamma
\ =\ H^0(X,\, \text{ad}(E_{L(P)})\otimes \text{End}(V)\otimes\text{End}(TX))^\Gamma. 
$$
Using the trace maps
\begin{equation}\label{etr}
\text{End}(V)\, \longrightarrow\, {\mathcal O}_X\ \ \text{ and }\ \ \text{End}(TX)\,
\longrightarrow\, {\mathcal O}_X,
\end{equation}
we have the map
\begin{equation}\label{e34}
\Phi_3\ :\ H^0(X,\, \text{ad}(E_{L(P)})\otimes \text{End}(V)\otimes\text{End}(TX))^\Gamma\
\longrightarrow\ H^0(X,\, \text{ad}(E_{L(P)}))^\Gamma.
\end{equation}
Now consider the homomorphism
\begin{equation}\label{e34b}
\widetilde{\Phi}\ :=\ \Phi_3\circ\Phi_2\circ\Phi_1\ :\ H^0(X,\, \text{ad}(E_{L(P)})\otimes V\otimes K_X)^\Gamma
\ \longrightarrow\ H^0(X,\, \text{ad}(E_{L(P)}))^\Gamma,
\end{equation}
where $\Phi_1$, $\Phi_2$ and $\Phi_3$ are constructed in \eqref{e32}, \eqref{e33} and \eqref{e34} respectively.
{}From \eqref{e22} we know that the following diagram is commutative:
$$
\begin{matrix}
H^0(X,\, \text{ad}(E_{L(P)})\otimes V\otimes K_X)^\Gamma & \xrightarrow{\,\,\, \widetilde{\Phi}\,\,\,} &
H^0(X,\, \text{ad}(E_{L(P)}))^\Gamma\\
\,\,\,\, \Big\downarrow\widehat{\beta}_V && \,\,\Big\downarrow\widehat{\beta}\\
{\mathbb C} & \xrightarrow{\,\,\, \text{Id}\,\,\,} & {\mathbb C}
\end{matrix}
$$
where $\widehat{\beta}$ and $\widehat{\beta}_V$ are the homomorphisms constructed in
\eqref{e26} and \eqref{e31} respectively, and $\widetilde{\Phi}$ is defined in \eqref{e34b}.
In other words, we have
\begin{equation}\label{e35}
\widehat{\beta}\circ \widetilde{\Phi}\ = \ \widehat{\beta}_V
\end{equation}
as elements of $\text{Hom}(H^0(X,\, \text{ad}(E_{L(P)})\otimes V\otimes K_X)^\Gamma, \,{\mathbb C})$.

To prove \eqref{e29} by contradiction, assume that
\begin{equation}\label{e36}
\beta_V\ =\ \Psi_* (\beta)\ \not=\ 0.
\end{equation}
{}From \eqref{e36} it follows that there is a section $s\, \in\, H^0(X,\, \text{ad}(E_{L(P)})\otimes
V\otimes K_X)^\Gamma$ such that
\begin{equation}\label{e37}
\widehat{\beta}_V (s) \ \not=\ 0,
\end{equation}
where $\widehat{\beta}_V$ is constructed in \eqref{e31}. Consider the section
\begin{equation}\label{e38}
\widehat{s} \ :=\ \widetilde{\Phi} (s) \ \in\ H^0(X,\, \text{ad}(E_{L(P)}))^\Gamma,
\end{equation}
where $s$ is the section in \eqref{e37} and $\widetilde{\Phi}$ is constructed in
\eqref{e34b}. From \eqref{e36}, \eqref{e35} and \eqref{e38}
we know that
\begin{equation}\label{e39}
\widehat{\beta}(\widehat{s})\ \not=\ 0.
\end{equation}

In view of \eqref{e39}, from Proposition \ref{prop2}(1) we know that for each point $x\, \in\, X$, the element
$\widehat{s}(x)\, \in\, \text{ad}(E_{L(P)})_x$ is \textit{not} nilpotent.

So for each point $x\in\, X$, the semisimple component of $\widehat{s}(x)\, \in\, \text{ad}(E_{L(P)})_x$,
for the Jordan decomposition, is nonzero. Moreover, the conjugacy class of the semisimple
component of $\widehat{s}(x)$ is actually independent of the point $x\,\in\, X$. To see this, take any
$L(P)$--invariant holomorphic function $I$ on $\ell({\mathfrak p})$. Then $x\, \longmapsto\, I(\widehat{s}(x))$
is a holomorphic function on $X$. This function is a constant one because $X$ is compact and connected. From
this it follows that the conjugacy class of the semisimple
component of $\widehat{s}(x)$ is independent of $x\,\in\, X$.

Take a holomorphic character $\chi\, :\, L(P)\, \longrightarrow\, {\mathbb G}_m\,=\, {\mathbb C}^*$.
Let $$d\chi\ :\ \ell({\mathfrak p})\ \longrightarrow\ {\mathbb C}$$ be the homomorphism of Lie algebras
given by $\chi$. This homomorphism $d\chi$ produces a homomorphism
\begin{equation}\label{e40a}
\widetilde{\chi}\ :\ \text{ad}(E_{L(P)})\ \longrightarrow\ {\mathcal O}_X.
\end{equation}
Let
\begin{equation}\label{e40}
\widetilde{\chi}_*\ :\ H^0(X,\, \text{ad}(E_{L(P)}))^\Gamma\ \longrightarrow\ H^0(X,\, {\mathcal O}_X)\
= \ {\mathbb C}
\end{equation}
be the homomorphism of global sections given by $\widetilde{\chi}$ in \eqref{e40a}.

{}From Proposition \ref{prop2}(2) it follows that there is a holomorphic character 
$\chi\, :\, L(P)\, \longrightarrow\, {\mathbb C}^*$ such that
\begin{equation}\label{e41}
\widetilde{\chi}_*(\widehat{s})\ \not=\ 0,
\end{equation}
where $\widehat{s}$ and $\widetilde{\chi}_*$ are constructed in \eqref{e38} and \eqref{e40} respectively.

The homomorphism $\widetilde{\chi}$ in \eqref{e40a} produces a homomorphism
\begin{equation}\label{e42}
\widetilde{\chi}'\ :\ H^0(X,\, \text{ad}(E_{L(P)})\otimes V\otimes K_X)^\Gamma
\ \longrightarrow\ H^0(X,\, V\otimes K_X)^\Gamma .
\end{equation}
Define the map
\begin{equation}\label{e43}
\Psi_1\ :\ H^0(X,\, V\otimes K_X)^\Gamma \ \longrightarrow\ H^0(X,\, V\otimes K_X)\otimes H^0(X,\,
V^*\otimes TX)^\Gamma, \ \ \, v\ \longmapsto\ v\otimes\phi,
\end{equation}
where $\phi$ is the anchor map. We have the natural map
\begin{equation}\label{e44}
\Psi_2\ :\ H^0(X,\, V\otimes K_X)^\Gamma\otimes H^0(X,\, V^*\otimes TX)^\Gamma\ \longrightarrow\
H^0(X,\, V\otimes K_X \otimes V^*\otimes TX)^\Gamma
\end{equation}
$$
 = \ H^0(X,\, \text{End}(V)\otimes \text{End}(TX))^\Gamma .
$$
Using the trace maps in \eqref{etr} we have the homomorphism
\begin{equation}\label{e45}
\Psi_3\ :\ H^0(X,\, \text{End}(V)\otimes \text{End}(TX))^\Gamma \ \longrightarrow\ H^0(X,\, {\mathcal O}_X)\
=\ {\mathbb C}.
\end{equation}
Now define
\begin{equation}\label{e45b}
\widetilde{\Psi}\ :=\ \Psi_3 \circ\Psi_2\circ\Psi_1\ :\ H^0(X,\, V\otimes K_X) \ \longrightarrow\ 
{\mathbb C},
\end{equation}
where $\Psi_1$, $\Psi_2$ and $\Psi_3$ are constructed in \eqref{e43}, \eqref{e44} and \eqref{e45}
respectively. We note that the following diagram is commutative:
$$
\begin{matrix}
H^0(X,\, \text{ad}(E_{L(P)})\otimes V\otimes K_X)^\Gamma
& \xrightarrow{\,\,\, \widetilde{\chi}'\,\,\,}& H^0(X,\, V\otimes K_X)^\Gamma\\
\,\,\,\,\Big\downarrow \widetilde{\Phi} && \,\,\,\, \Big\downarrow\widetilde{\Psi}\\
{\mathbb C} & \xrightarrow{\,\,\, \text{Id}\,\,\,} & {\mathbb C}
\end{matrix}
$$
where $\widetilde{\chi}'$, $\widetilde{\Phi}$ and $\widetilde{\Psi}$ are constructed in \eqref{e42},
\eqref{e34b} and \eqref{e45b} respectively. Consequently, using \eqref{e38} we have
\begin{equation}\label{e46}
\widetilde{\Psi}\circ\widetilde{\chi}'(s)\ = \ \widetilde{\chi}_*(\widehat{s})
\end{equation}
as elements of $\mathbb C$. From \eqref{e41} and \eqref{e46} we conclude that 
\begin{equation}\label{e47}
\widetilde{\chi}'(s)\ \not= \ 0.
\end{equation}
In view of the construction of $\widetilde{\chi}'$ done in \eqref{e42}, from \eqref{e47} it
is deduced that the following composition of maps
$$
TX \ \xrightarrow{\,\,\, \widetilde{\chi}'(s)\,\,\,}\ V \ \xrightarrow{\,\,\,\phi\,\,\,}\ TX
$$
coincides with multiplication by the nonzero constant $\widetilde{\chi}_*(\widehat{s})\,
\in\, {\mathbb C}\setminus\{0\}$ in \eqref{e41}.
Consequently, the homomorphism
$$
\frac{1}{\widetilde{\chi}_*(\widehat{s})}\cdot \widetilde{\chi}'(s)\ :\ TX \ \longrightarrow\ V
$$
gives a splitting of the equivariant Lie algebroid $(V,\, \phi)$. But $(V,\, \phi)$ does not split.
In view of this contradiction, it follows that \eqref{e36} does not hold. This completes the proof.
\end{proof}

\begin{corollary}\label{cor1}
The equivariant principal $G$--bundle $E_G$ admits an equivariant holomorphic
Lie algebroid connection.
\end{corollary}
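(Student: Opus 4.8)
The plan is to deduce this from the two results immediately preceding it, using the Levi reduction supplied by Remark~\ref{rem1} as the bridge. First I would invoke Remark~\ref{rem1} to fix a parabolic subgroup $P\,\subset\, G$, a Levi subgroup $L(P)\,\subset\, P$, and a $\Gamma$--stable holomorphic reduction of structure group $E_{L(P)}\,\subset\, E_G$ to $L(P)$ with the property that the maximal torus of the group of $\Gamma$--equivariant holomorphic automorphisms of $E_{L(P)}$ coincides with the center of $L(P)$. This maximality property is precisely the hypothesis under which the obstruction analysis of Proposition~\ref{prop3} was carried out, so it is the right reduction to work with.

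Next, I would apply Proposition~\ref{prop3} verbatim: it asserts that this $E_{L(P)}$ admits an equivariant holomorphic Lie algebroid connection, i.e.\ a $\Gamma$--equivariant holomorphic homomorphism $\delta'\,:\, V\,\longrightarrow\, \mathcal{A}(E_{L(P)})$ with $\rho'\circ\delta'\,=\,{\rm Id}_V$, where $\rho'$ is as in \eqref{e15}.

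Finally, I would transport $\delta'$ to $E_G$ by functoriality of the construction $E\,\mapsto\,\mathcal{A}(E)$ along the inclusion $E_{L(P)}\,\hookrightarrow\, E_G$. Lemma~\ref{lem2} records exactly this functoriality: it produces the commutative diagram \eqref{e15} with vertical homomorphism $b'\,:\,\mathcal{A}(E_{L(P)})\,\longrightarrow\,\mathcal{A}(E_G)$ compatible with the projections to $V$, so that the composite $b'\circ\delta'\,:\, V\,\longrightarrow\,\mathcal{A}(E_G)$ is again $\Gamma$--equivariant and satisfies $\rho\circ(b'\circ\delta')\,=\,{\rm Id}_V$. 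Hence $b'\circ\delta'$ is an equivariant holomorphic Lie algebroid connection on $E_G$, which is the claim.

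There is no real obstacle left at this stage: the entire difficulty has been absorbed into Proposition~\ref{prop3}, whose proof is where nonsplitness of $(V,\,\phi)$ is actually used (via the vanishing $\beta_V=\Psi_*(\beta)=0$, the Jordan-decomposition argument, and Proposition~\ref{prop2}). Granting Proposition~\ref{prop3} and Lemma~\ref{lem2}, the corollary is a one-line deduction, and the only thing to check is the routine compatibility in \eqref{e15} that makes $b'\circ\delta'$ split $\rho$.
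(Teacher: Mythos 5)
Your proof is correct and is exactly the paper's argument: the paper also deduces the corollary by combining Lemma \ref{lem2} with Proposition \ref{prop3}, applied to the Levi reduction $E_{L(P)}$ furnished by Remark \ref{rem1}. Your added detail about $b'\circ\delta'$ splitting $\rho$ just unwinds what Lemma \ref{lem2} already establishes.
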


\begin{proof}
This follows from the combination of Lemma \ref{lem2} and Proposition \ref{prop3}.
\end{proof}

\section{Criterion for Lie algebroid connection}\label{sec:Parabolic Lie algebroids and parabolic connections}

As before, $G$ is a complex reductive affine algebraic group. The combination of
Corollary \ref{cor-1} and Corollary \ref{cor1} gives the following:

\begin{theorem}\label{thm1}
\mbox{}
\begin{itemize}
\item Let $(V,\,\phi)$ be a nonsplit equivariant Lie algebroid. Then any equivariant
principal $G$--bundle over $X$ admits an equivariant holomorphic Lie algebroid connection.

\item Let $(V,\, \phi)$ be a split equivariant Lie algebroid.
Let $E_G$ be an equivariant principal $G$--bundle over $X$. The following four statements
are equivalent:
\begin{enumerate}
\item $E_G$ over $X$ admits an equivariant holomorphic Lie algebroid connection.

\item $E_G$ admits an equivariant holomorphic connection.

\item $E_G$ admits a holomorphic connection.

\item For every triple $(P,\, L(P),\, \chi)$ as in Lemma \ref{lem1}, and every
$\Gamma$--equivariant holomorphic reduction of structure group $E_{L(P)}$ of $E_G$ to $L(P)$,
$$
{\rm degree}({\mathcal L}(E_{L(P)},\chi)) \ =\ 0.
$$
\end{enumerate}
\end{itemize}
\end{theorem}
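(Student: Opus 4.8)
The plan is to assemble Theorem~\ref{thm1} directly from the pieces already established, with essentially no new argument required. The nonsplit case is immediate: it is exactly the content of Corollary~\ref{cor1}, which in turn rests on Lemma~\ref{lem2} and Proposition~\ref{prop3}. So the first half of the statement can be disposed of in a single sentence, citing Corollary~\ref{cor1}.

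For the split case, I would simply invoke Corollary~\ref{cor-1}, which already states the equivalence of the four conditions (1)--(4) for a split equivariant Lie algebroid $(V,\,\phi)$ and a reductive group $G$. Recall that Corollary~\ref{cor-1} was itself obtained by combining Proposition~\ref{prop1} (which shows that $E_G$ admits an equivariant holomorphic Lie algebroid connection if and only if it admits an equivariant holomorphic connection, and ties this to the degree condition via Lemma~\ref{lem1}), Corollary~\ref{cor-2} (equivalence of equivariant and non-equivariant holomorphic connections, via the averaging trick over $\Gamma$), and Lemma~\ref{lem1} (the Atiyah--Weil--type criterion in terms of degrees of associated line bundles ${\mathcal L}(E_{L(P)},\chi)$). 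Thus there is nothing left to prove; one only needs to state that the two bullets of the theorem are Corollary~\ref{cor1} and Corollary~\ref{cor-1} respectively.

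Since there is no genuine obstacle here, the only care needed is bookkeeping: making sure the triple $(P,\,L(P),\,\chi)$ in statement~(4) is the same as in Lemma~\ref{lem1}, and that the hypothesis ``$(V,\,\phi)$ split'' is exactly the hypothesis under which Corollary~\ref{cor-1} was proved. I would therefore write the proof as: ``The first assertion is Corollary~\ref{cor1}. For the second assertion, note that $(V,\,\phi)$ being split is precisely the hypothesis of Corollary~\ref{cor-1}, which gives the equivalence of the four statements.'' The main work of the paper has already been carried out in Sections~\ref{sec:Equivariant holomorphic connections and split Lie algebroids} and~\ref{sec:Nonsplit Lie algebroid connections}; Theorem~\ref{thm1} is just the packaging of those results into one statement, and the translation to parabolic $G$--bundles (Theorem~\ref{thm-i2}) will follow afterward via the standard Grothendieck--Kawamata correspondence between $\Gamma$--equivariant bundles on $X$ and parabolic bundles on $X/\Gamma$.
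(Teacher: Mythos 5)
Your proposal is correct and is exactly the paper's own proof: Theorem~\ref{thm1} is stated there as the combination of Corollary~\ref{cor1} (nonsplit case) and Corollary~\ref{cor-1} (split case), with no additional argument. Your bookkeeping remarks about matching hypotheses are fine and nothing further is needed.
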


\begin{remark}\label{rem2}
Take a complex Lie group $\mathbb G$ and a holomorphic principal $\mathbb G$--bundle $E_{\mathbb G}$
over $X$. Consider the corresponding Lie algebroid $(\text{At}(E_{\mathbb G}),\, \varpi)$ as in \eqref{e7}.
Assume that $E_{\mathbb G}$ does not admit any holomorphic connection. It was shown in Example
\ref{ex1} that the Lie algebroid $(\text{At}(E_{\mathbb G}),\, \varpi)$ is nonsplit. Assume that
$E_{\mathbb G}$ is equivariant. Therefore, Theorem \ref{thm1} says that any equivariant principal $G$--bundle
$E_G$ over $X$ admits an equivariant holomorphic Lie algebroid connection for the Lie algebroid
$(\text{At}(E_{\mathbb G}),\, \varpi)$.

Next consider the case where the equivariant holomorphic principal $\mathbb G$--bundle $E_{\mathbb G}$
does not admit any holomorphic connection. Now Theorem \ref{thm1} says that the following four statements
are equivalent:
\begin{enumerate}
\item $E_G$ over $X$ admits an equivariant holomorphic Lie algebroid connection.

\item $E_G$ admits an equivariant holomorphic connection.

\item $E_G$ admits a holomorphic connection.

\item For every triple $(P,\, L(P),\, \chi)$ as in Lemma \ref{lem1}, and every
$\Gamma$--equivariant holomorphic reduction of structure group $E_{L(P)}$ of $E_G$ to $L(P)$,
$$
{\rm degree}({\mathcal L}(E_{L(P)},\chi)) \ =\ 0.
$$
\end{enumerate}
\end{remark}

We will reformulate Theorem \ref{thm1} in the set-up of parabolic bundles.

Fix $n$ ordered distinct points
\begin{equation}\label{g1}
S \ =\ \{s_1,\, \cdots,\, s_n\}\ \subset\ X.
\end{equation}
For each $1\, \leq\, i\, \leq n$, fix an integer $N_i\, \geq\, 2$. We assume the following:
\begin{enumerate}
\item If ${\rm genus}(X)\,=\, 0$, then $n\, \not=\,1$.

\item If ${\rm genus}(X)\,=\, 0$, and $n\, =\,2$, then $N_1\,=\, N_2$.
\end{enumerate}

A parabolic $G$--bundle consists of a complex manifold ${\mathcal E}_G$, a surjective holomorphic map
$p\, :\, {\mathcal E}_G\, \longrightarrow\, X$ and a holomorphic right action of $G$
$$
\Psi\ :\ {\mathcal E}_G\times G\ \longrightarrow\ {\mathcal E}_G
$$
such that the following conditions hold:
\begin{enumerate}
\item $\Psi(y,\, h)\,=\, \Psi(x)$ for all $x\, \in\, {\mathcal E}_G$ and $h\, \in\, G$.

\item For every $x\, \in\, X$, the action of $G$ on $p^{-1}(x)$ is transitive.

\item The restriction $p\big\vert_{p^{-1}(X\setminus S)}\, :\, p^{-1}(X\setminus S)\, \longrightarrow\,
X\setminus S$ (see \eqref{g1}) is a holomorphic principal $G$--bundle on $X\setminus S$.

\item The isotropy subgroup for any $y\, \in\, p^{-1}(s_i)$ is a finite cyclic subgroup of $G$ whose order
divides $N_i$.
\end{enumerate}
(See \cite{BBN}, \cite{Bi}, \cite{BS}.)
For $1\, \leq\, i\, \leq\, n$, the parabolic weights, at $s_i$, of a parabolic vector bundle will be
integral multiples of $\frac{1}{N_i}$.

The parabolic tangent bundle, denoted by $(TX)_*$, is $TX\otimes {\mathcal O}_X(-\sum_{i=1}^n s_i)$
equipped with parabolic weight $\frac{1}{N_i}$ at $s_i$, $1\,\leq\,i\, \leq\, n$.

A parabolic Lie algebroid is a pair $(V_*,\, \phi)$, where $V_*$ is a parabolic vector bundle, and
$\phi\, :\, V_*\, \longrightarrow\, (TX)_*$ is a parabolic homomorphism, such that
\begin{enumerate}
\item $V_*$ is equipped with a $\mathbb C$--Lie algebra structure
$$
[-,\, -] \,\,:\,\, V_*\otimes_{\mathbb C} V_* \,\, \longrightarrow\,\, V_*
$$
which is compatible with the parabolic structure, and

\item $[s,\, f\cdot t]\,=\, f\cdot [s,\, t]+\phi(s)(f)\cdot t$
for locally defined holomorphic sections
$s,\, t$ of $V_*$ and all locally defined holomorphic functions $f$ on $X$.
\end{enumerate}

A parabolic Lie algebra $(V_*,\, \phi)$ is called split if there is a parabolic homomorphism
$\beta\,:\, (TX)_*\, \longrightarrow\, V_*$ such that $\phi\circ\beta\, =\, {\rm Id}_{(TX)_*}$.
A parabolic Lie algebra $(V_*,\, \phi)$ is called nonsplit if it is not split.

Take a parabolic $G$--bundle $p\, :\, {\mathcal E}_G\, \longrightarrow\, X$. The invariant direct image,
on $X$, of the holomorphic tangent bundle $T{\mathcal E}_G$
$$
(p_*T{\mathcal E}_G)^G \ \subset\ p_*T{\mathcal E}_G
$$
has a natural parabolic structure. The resulting parabolic vector bundle is called the
Atiyah bundle for ${\mathcal E}_G$, and it is denoted by $\text{At}({\mathcal E}_G)_*$. Let $T_p\,\subset\,
T{\mathcal E}_G$ be the relative tangent bundle for the projection $p$. The parabolic adjoint bundle
$\text{ad}({\mathcal E}_G)_*$ is defined to be the invariant direct image.$(p_*T_p)^G$.
The parabolic vector bundle $\text{At}({\mathcal E}_G)_*$ fits in the following short exact sequence
of parabolic vector bundles over $X$:
\begin{equation}\label{f1}
0\ \longrightarrow\ \text{ad}({\mathcal E}_G)_* \ \longrightarrow\ \text{At}({\mathcal E}_G)_*
\ \stackrel{\psi}{\longrightarrow}\ (TX)_* \ \longrightarrow\ 0.
\end{equation}
A holomorphic connection on ${\mathcal E}_G$ is a holomorphic splitting of \eqref{f1} (see \cite{Bi}).

Define ${\mathcal A}({\mathcal E}_{\mathbb G})_*$ to be the parabolic vector bundle given by the kernel of
the parabolic homomorphism
$$
V^*\oplus \text{At}({\mathcal E}_G)_*\ \longrightarrow\ (TX)_*, \ \ \, (a,\, b)\ \longmapsto\
\phi(a)-\psi(b),
$$
where $\psi$ is the homomorphism in \eqref{f1}. The parabolic vector bundle $\mathcal{A}({\mathcal E}_G)_*$
fits in the following short exact sequence of parabolic vector bundles over $X$:
\begin{equation}\label{f2}
0\ \longrightarrow\ \text{ad}({\mathcal E}_G)_* \ \longrightarrow\ \mathcal{A}({\mathcal E}_G)_*
\ \longrightarrow\ V \ \longrightarrow\ 0.
\end{equation}
A holomorphic Lie algebroid connection on ${\mathcal E}_G$ is a holomorphic splitting of
\eqref{f2}.

There is a ramified Galois covering $\varpi\, :\, Y\, \longrightarrow\, X$ such that
\begin{enumerate}
\item the branch locus of $\varpi$ is $S \,=\, \{s_1,\, \cdots,\, s_n\}\, \subset\, X$ (see \eqref{g1}), and

\item for every $1\, \leq\, i\, \leq\, n$, the multiplicity of $\varpi$ at any $y\,\in\,
\varpi^{-1}(s_i)$ is $N_i$.
\end{enumerate}
(See \cite[p. 26, Proposition 1.2.12]{Na} for the
existence of such a covering $\varpi$.)

Let $\Gamma\,=\, \text{Gal}(\varpi)$ be the Galois group for $\varpi$.

The parabolic $G$--bundles over $X$ correspond to $\Gamma$--equivariant principal $G$--bundles on
$Y$ \cite{BBN}, \cite{BS}. In particular, the parabolic vector bundles over $X$ correspond to $\Gamma$--equivariant
vector bundles on $Y$. The parabolic Lie algebroids over $X$ correspond to the $\Gamma$--equivariant Lie
algebroids on $Y$. Parabolic $G$--bundles over $X$ equipped with a parabolic connection correspond to
the $\Gamma$--equivariant principal $G$--bundles on $Y$ equipped with a $\Gamma$--equivariant connection.

Consequently, Theorem \ref{thm1} gives the following:

\begin{theorem}\label{thm2}
\mbox{}
\begin{itemize}
\item Let $(V_*,\,\phi)$ be a nonsplit parabolic Lie algebroid. Then any
parabolic $G$--bundle over $X$ admits a parabolic Lie algebroid connection.

\item Let $(V_*,\, \phi)$ be a split parabolic Lie algebroid.
Let ${\mathcal E}_G$ be a parabolic $G$--bundle over $X$. The following three statements
are equivalent:
\begin{enumerate}
\item ${\mathcal E}_G$ over $X$ admits a parabolic Lie algebroid connection.

\item ${\mathcal E}_G$ admits a parabolic holomorphic connection.

\item For every triple $(P,\, L(P),\, \chi)$ as in Lemma \ref{lem1}, and every
holomorphic reduction of structure group ${\mathcal E}_{L(P)}$ of ${\mathcal E}_G$ to $L(P)$, the
parabolic line bundle over $X$ associated to ${\mathcal E}_{L(P)}$ for $\chi$ has parabolic degree zero.
\end{enumerate}
\end{itemize}
\end{theorem}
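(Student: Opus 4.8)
The plan is to deduce Theorem \ref{thm2} from Theorem \ref{thm1} by transporting everything through the ramified Galois covering $\varpi\, :\, Y\, \longrightarrow\, X$ with Galois group $\Gamma\,=\, \text{Gal}(\varpi)$. The key dictionary is the well-known equivalence of categories between parabolic objects on $X$ (with the chosen parabolic divisor $S$ and the prescribed multiplicities $N_i$) and $\Gamma$--equivariant objects on $Y$, as recorded just above from \cite{BBN}, \cite{BS}. First I would verify that this equivalence is compatible with all the structures appearing in the statement: it sends the parabolic tangent bundle $(TX)_*$ to $TY$ as a $\Gamma$--equivariant line bundle, it sends the parabolic Lie algebroid $(V_*,\,\phi)$ to a $\Gamma$--equivariant Lie algebroid $(\widetilde V,\,\widetilde\phi)$ on $Y$ (the Lie bracket being compatible with the parabolic structure corresponds exactly to $\Gamma$--equivariance of the bracket), and it sends a parabolic $G$--bundle ${\mathcal E}_G$ to a $\Gamma$--equivariant principal $G$--bundle $E_G$ on $Y$. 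Crucially, the short exact sequences \eqref{f1} and \eqref{f2} of parabolic bundles pull back to the Atiyah-type sequences \eqref{e7} and \eqref{e11} of $\Gamma$--equivariant bundles on $Y$, so that a parabolic (Lie algebroid) connection on ${\mathcal E}_G$ — a splitting of \eqref{f1} or \eqref{f2} — corresponds precisely to an equivariant holomorphic (Lie algebroid) connection on $E_G$ in the sense of Definitions \ref{def1} and \ref{def2}.

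Next I would check that ``split'' and ``nonsplit'' match up under the correspondence. A parabolic splitting $\beta\,:\, (TX)_*\, \longrightarrow\, V_*$ with $\phi\circ\beta\,=\, {\rm Id}$ corresponds to a $\Gamma$--equivariant splitting $TY\, \longrightarrow\, \widetilde V$ of $\widetilde\phi$; conversely, any $\Gamma$--equivariant splitting on $Y$ descends to a parabolic splitting on $X$ because the equivalence of categories is exact and full on morphisms. (Here Lemma \ref{lem0} is not even needed, since the correspondence already produces equivariant morphisms, but it shows the equivalence between ``admits a splitting'' and ``admits an equivariant splitting'' is automatic.) Hence $(V_*,\,\phi)$ is split if and only if $(\widetilde V,\,\widetilde\phi)$ is a split equivariant Lie algebroid on $Y$, and similarly for nonsplit. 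With this in hand, the nonsplit case of Theorem \ref{thm2} is immediate: by Theorem \ref{thm1}, $E_G$ on $Y$ admits an equivariant holomorphic Lie algebroid connection, which translates back to a parabolic Lie algebroid connection on ${\mathcal E}_G$.

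For the split case I would apply the four-part equivalence in Theorem \ref{thm1} to $E_G$ on $Y$ and then translate statements (1), (2), (4) back to $X$; statement (3) of Theorem \ref{thm1} is absorbed since a parabolic connection on ${\mathcal E}_G$ is by definition the same as an equivariant holomorphic connection on $E_G$, so there is no separate ``non-equivariant connection on $Y$'' item in the parabolic formulation. Items (1) and (2) translate exactly as above. For item (3) of Theorem \ref{thm2} I would match, under the correspondence, the $\Gamma$--equivariant holomorphic reductions $E_{L(P)}\, \subset\, E_G$ of $E_G$ to $L(P)$ with the holomorphic reductions of structure group ${\mathcal E}_{L(P)}$ of the parabolic bundle ${\mathcal E}_G$ to $L(P)$, and I would identify the associated $\Gamma$--equivariant line bundle ${\mathcal L}(E_{L(P)},\chi)$ on $Y$ with the parabolic line bundle on $X$ associated to ${\mathcal E}_{L(P)}$ and $\chi$. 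The only numerical point to pin down is that the ordinary degree of the $\Gamma$--equivariant line bundle on $Y$ equals $\#\Gamma$ times the parabolic degree of the corresponding parabolic line bundle on $X$ — this is the standard relation between equivariant degree upstairs and parabolic degree downstairs — so $\text{degree}({\mathcal L}(E_{L(P)},\chi))\,=\,0$ on $Y$ is equivalent to parabolic degree zero on $X$.

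The main obstacle, and the only place requiring genuine care rather than invocation of the dictionary, is precisely this last compatibility: confirming that the equivalence of categories between parabolic bundles and equivariant bundles intertwines the Atiyah exact sequences \eqref{f1}/\eqref{f2} with \eqref{e7}/\eqref{e11}, and that the parabolic weights built into $(TX)_*$ and $\text{ad}({\mathcal E}_G)_*$ are exactly those forced by the ramification of $\varpi$, so that splittings and degrees correspond on the nose. Once one is satisfied — as one should be, by the references \cite{BBN}, \cite{Bi}, \cite{BS} and the explicit description of $\text{At}({\mathcal E}_G)_*$ via invariant direct images — that the functor is exact, fully faithful, and compatible with tensor operations, direct images, and the degree/parabolic-degree normalization, the theorem follows formally from Theorem \ref{thm1} with no further work.
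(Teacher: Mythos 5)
Your proposal is correct and follows essentially the same route as the paper: Theorem \ref{thm2} is obtained by translating Theorem \ref{thm1} through the correspondence between parabolic $G$--bundles on $X$ and $\Gamma$--equivariant principal $G$--bundles on the ramified Galois cover $Y$, with the compatibilities (Atiyah sequences, splittings, degree versus parabolic degree) that you spell out being exactly what the paper invokes via \cite{BBN}, \cite{BS}.
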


\section*{Acknowledgements}

We thank the referee for going through the paper very carefully and making numerous suggestions. The
author is partially supported by a J. C. Bose Fellowship (JBR/2023/000003).


\end{document}